\numberwithin{theorem}{section}
\numberwithin{lemma}{section}
\numberwithin{corollary}{section}
\numberwithin{proposition}{section}
\DeclareMathOperator{\spec}{Spec}
\DeclareMathOperator{\conv}{Conv}
\DeclareMathOperator{\supp}{Supp}
\DeclareMathOperator{\aut}{Aut}
\newtheorem{observation}[theorem]{Observation}
\newtheorem{lemm}[theorem]{Lemma}
\newtheorem{coroll}[theorem]{Corollary}
\newenvironment{myproof}[1]
    {\par\noindent\textit{Proof of #1.}\ }
   {\noindent \hfill$\qed$\par}
\newenvironment{prooff}[1]
    {\par\noindent\textit{Proof.}\ }
    {\hfill$\qed$\par}
\begin{document}
\newcommand{\dist}{\text{dist}}
\newcommand{\diam}{\text{diam}}
\newcommand{\barG}{\overline{G}} 
\newcommand{\tr}{\text{tr}}
\newcommand{\cC}{\mathcal{C}} 
\newcommand{\close}{C_{\barG}} 

\title{Fractional forcing number of graphs\thanks{This research was in part supported by a grant from IPM (No. 99050114) and by the grant SVV-2023-260699 from Charles University}}
%
\author{
Javad B. Ebrahimi\inst{1,2} \and 
Babak Ghanbari\inst{3}
}
\authorrunning{J. B. Ebrahimi and B. Ghanbari}
%
\institute{Department of Mathematical Sciences, Sharif University of Technology \and
IPM, Institute for Research in Fundamental Sciences \and Computer Science Institute, Charles University
\email{}\\
\url{} }
\maketitle              
\begin{abstract}
The notion of forcing sets for perfect matchings was introduced by Harary, Klein, and \v{Z}ivkovi\'{c}. The application of this problem in chemistry, as well as its interesting theoretical aspects, made this subject very active. In this work, we introduce the notion of forcing function of fractional perfect matchings, which is continuous analogous to forcing sets defined over the perfect matching polytope of graphs. We show that this object is a continuous and concave function extension of the integral forcing set. Then, we use our results in the continuous world to conclude new bounds and results in the discrete case of forcing sets, for the family of regular edge-transitive graphs. In particular, we derive new upper bounds for the maximum forcing number of hypercube graphs. 

\keywords{Perfect matching  \and Fractional perfect matching \and Forcing number \and Fractional forcing number \and Fractional graph theory}
\end{abstract}

\section{Introduction}
The notion of ``defining set" is an important concept in studying combinatorial structures. Roughly speaking, when we talk about a defining set for a particular object, we mean a part of it, which uniquely extends to the entire object. As an example, a defining set for a particular perfect matching $M$ of a graph (also known as a \textit{forcing set}) is a subset of $M$ such that $M$ is the unique perfect matching of the graph containing it. The size of the smallest forcing sets of a perfect matching is called the \textit{forcing number} of it. The smallest and the largest forcing numbers over all possible perfect matchings of a graph are, respectively, called the \textit{forcing number} and the \textit{maximum forcing number} of the graph.

This parameter is particularly important in the theory of resonance in Chemistry. In fact, in \cite{RN24}, Klein and Randi\'{c} defined the notion of the innate degree of freedom of Kekul\'{e} structures. In the language of graph theory, Kekul\'{e} structure is equivalent to the notion of perfect matching and is defined for the graph representation of the molecules. Also, the innate degree of freedom is equivalent to the notion of forcing set. Forcing set and forcing number appeared in the graph theory literature in \cite{RN83} for the first time. In that work, Harary et al. studied these parameters for a particular family of graphs.


Due to the application importance as well as theoretical connections to other mathematical subjects, such as cryptography or Latin squares (see \cite{RN1, RN81}), forcing number has been extensively studied in the last three decades. The main problem is to find the exact value or upper and lower bound on the forcing and the maximum forcing number of families of graphs. Another related question is to find the spectrum of the forcing number of a particular graph, i.e. the set of forcing numbers of all perfect matchings of it. We briefly review these results in the following.

\renewcommand\labelitemi{\small$\bullet$}
\begin{itemize}
\item
\textbf{Grid $P_m\times P_n$.} Patcher and Kim \cite{RN94}, found the minimum and the maximum forcing number of $P_{2n}\times P_{2n}$. Afshani et al. \cite{RN2} proved that for every integer $r \in [n,n^2]$, there exists a perfect matching in $P_{2n}\times P_{2n}$ with forcing number $r$. They also found some upper bounds on the forcing number of $P_m\times P_n$ for particular values $m$ and $n$. Zhao and Zhang \cite{RN84}, studied the forcing number of $2\times n$ and $3 \times 2n$ grids. In general, the problem of forcing number of $m\times n$ grids is an open problem.
\item
\textbf{Cylindrical grid $P_m\times C_n$.} Afshani et al. \cite{RN2}, studied the maximum forcing number of Cylindrical grids and found the exact value of the maximum forcing number of $P_{m} \times C_{2n}$. They also asked about the maximum forcing number of $P_{2m}\times C_{2n+1}$. Jiang and Zhang \cite{RN98}, answered this question and found the exact value of the maximum forcing number of $P_{2m}\times C_{2n+1}$. In general, the forcing number and the forcing spectrum of $P_m \times C_n$ are unknown.
\item
\textbf{Torus $C_m\times C_n$.} Riddle \cite{RN32}, studied the forcing number of $2m\times 2n$ torus and found the exact value of its forcing number. Afshani et al. \cite{RN2}, provided an upper bound on the maximum forcing number of $2n \times 2n$ torus. Kleinerman \cite{RN97}, found the exact value of the maximum forcing number of $C_{2m}\times C_{2n}$. The forcing spectrum of $C_{2m}\times C_{2n}$ is unknown.
\item
\textbf{Stop sign.} In \cite{RN96}, Lam and Patcher introduced a family of graphs called $(n, k)$-\textit{stop sign}. They found a lower and upper bound on the forcing number of perfect matchings of these graphs and showed that these bounds are sharp.
\item
\textbf{Hypercube $Q_n$.} Patcher and Kim \cite{RN94}, conjectured that the forcing number of $n$-dimensional hypercube is equal to $2^{n-2}$. Riddle \cite{RN32}, proved this conjecture for even $n$, using the concept of trailing vertices. Diwan \cite{RN90}, used an elegant matrix completion method to resolve this conjecture for all $n$. Adams et al. \cite{RN1}, showed that for every $r \in [2^{n-2},2^{n-2}+2^{n-5}]$, there exists a perfect matching in $Q_n$ with forcing number $r$. Riddle \cite{RN32}, found a lower bound on the maximum forcing number of $n$-dimensional hypercube and showed that for every constant $c<1$, for large enough $n$, there exists a perfect matching in $Q_n$ with forcing number at least $c2^{n-1}$. Adams et al. \cite{RN1} extend their result for any $d$-regular graph with at least one perfect matching. In this paper, as an application of our main result, we find the upper bound on the maximum forcing number of $Q_n$.


\item
\textbf{Other families of graphs.} The forcing number of some other families of graphs has been studied in the context of Chemistry. See \cite{ RN7, RN10, RN12, RN14, RN16, RN18, RN19, RN22, RN23, RN224, RN266}.
\end{itemize}

Recently, some related quantities such as anti-forcing sets and numbers, global forcing sets, forcing and anti-forcing polynomials, total forcing number, and complete forcing numbers have been defined and have been studied. See \cite{ RN02, RN03, RN04, RN91, RN42, RN44, RN45, RN46, RN47, RN48, RN54, RN55, RN59, RN60, RN61, RN62, RN63, RN65, RN66} for anti-forcing, \cite{RN3, RN6, RN25} for global forcing, and  \cite{RN101, RN85, RN84, RN100} for forcing and anti-forcing polynomials.

\subsection{Our Contribution}
In this work, we first introduce the concept of fractional forcing function in a general setting. Then, we restrict our attention to the case where the functions are associated with fractional matchings. More precisely, for a fixed graph $G$, the forcing number is a function that assigns a non-negative value to every perfect matching, while the fractional forcing number assigns a non-negative value to every fractional perfect matching (see Definition \ref{fpm} for the definition of fractional perfect matching). Thus, the fractional forcing number is a function defined over a larger domain. Among other things, we show that our definition of the forcing number is indeed a function extension of the traditional forcing number. In other words, both functions agree on the intersection of their domains, i.e. the set of all perfect matchings of $G$.

Then, we further study the fractional forcing number as a real-valued function defined over the set of fractional perfect matchings of the underlying graph $G$. Our results are best explained in geometric language. In this view, every Euclidian perfect matching corresponds to a point in an appropriate space. The forcing number will correspond to an integral valued function over the set of these points.

In the bipartite graphs, fractional perfect matching can be viewed as a point in the convex hull of the perfect matching points, and the fractional forcing number is a function extension of the forcing number over the entire convex hull. In section \ref{sec2.1}, we define the notion of the fractional matching polytope of a graph in general.

Our main results are Theorem \ref{cont} and Theorem \ref{Thmain2} in which we prove that the fractional forcing number is a concave and continuous function on its domain. This result has interesting consequences which can be translated to the discrete world of integral perfect matchings and their forcing numbers. In particular, for the class of regular edge-transitive graphs, we describe the point for which the fractional forcing number is maximized. This can be used to obtain an upper bound for the maximum forcing number of those graphs. As an example, we derive the first non-trivial upper bound for the maximum forcing number of hypercube graphs.


The structure of the paper is as follows. In the next section, we introduce the notations and overview the basic definitions and propositions that we use in the subsequent sections. The next section contains the formal description of the problem as well as the main result of this work. In section $4$, we present an application of our result.

\section{Preliminaries}\label{prele}

A \textit{graph} $G$ is a pair $(V(G), E(G))$ in which $V(G)$ is a finite set of elements, called the vertices of $G$ and $E(G)$ is a subset of $2$-subsets of $V(G)$. Elements of $E(G)$ are called the edges of $G$. The vertices $x$ and $y$ of an edge $\{x, y\}$ are called the \textit{endpoints} of the edge. If 
$\{v_i,v_j\} \in E(G)$, we write $v_i \sim v_j$ and say $v_i$ is a \textit{neighbor} of $v_j$.
$N(v_i) = \{v_j \in V(G) : v_i \sim v_j\}$ is the open neighborhood of $v_i$ and  $N[v_i] := N(v_i) \cup \{v_i\}$ is a close neighborhood of $v_i$. 

A \textit{walk} $W$ in $G$ is a sequence $v_{i_1}, v_{i_2}, \dots, v_{i_k}$ of vertices, where for all $j \in \{1, \dots, k-1\}$, $v_{i_j}\sim v_{i_{j+1}}$. If, in addition, $v_{i_1} = v_{i_k}$, we call it a \textit{closed walk}. The closed walk $W$ with no repeated pair of vertices except $v_1,v_k$ is called a \textit{cycle}.

A \textit{path} $P$ in $G$ is a walk with distinct vertices. The \textit{length} of the path $P$ is the number of edges in $P$ (i.e. $k-1$ where $k$ is the number of vertices of $P$).  
For every $v,w \in V(G)$ define $d_G(v,w)$ to be the length of a shortest path from $v$ to $w$.

For every vector $u= (u_1, \dots, u_n)\in \{0,1\}^n$, define $\omega(u) = \sum|u_i|$. 
For every two vectors $\bar{a}= (a_1, \dots, a_n)$ and $\bar{b}= (b_1, \dots, b_n)$, define the Hamming distance between $\bar{a}$ and $\bar{b}$, denoted by
$d_H(\bar{a},\bar{b}):= \omega(\bar{a}-\bar{b}) = \sum|a_i - b_i|$.

The \textit{hypercube graph} $Q_n$ is the graph with the vertex set $V(Q_n) = \big\{ (a_1, \dots,$ $a_n) : \forall i, a_i\in \{0,1\}\big\}$, and the edge set 
$E(Q_n) =$ $\big\{\{v_i, v_j\} : d_H(v_i, v_j) = 1\big\}$. It is easy to see that $d_{Q_n}(v_i,v_j) = d_H(v_i,v_j)$.

Let $G$ and $H$ be any graphs. A \textit{homomorphism} from $G$ to $H,$ written as $f:~G~\rightarrow~H$, is a mapping $f: V(G) \rightarrow V(H)$ such that $f(u) f(v) \in E(H)$ whenever $u v \in E(G)$.

\begin{proposition}[{\cite[Proposition 1.3]{RN200}}]\label{pro1}
A mapping $f: V\left(C\right) \rightarrow V(G)$ is a homomorphism of cycle $C = (v_1, \dots, v_k)$ to $G$ if and only if $f(v_1), \cdots, f(v_k)$ is a closed walk in $G$.
\end{proposition}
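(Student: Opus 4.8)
The plan is simply to unwind the two definitions and check that they describe the same condition. Recall that the cycle $C=(v_1,\dots,v_k)$ has edge set $E(C)=\big\{\{v_i,v_{i+1}\} : 1\le i\le k-1\big\}\cup\big\{\{v_k,v_1\}\big\}$, and that saying $f(v_1),\dots,f(v_k)$ is a closed walk in $G$ means that every two consecutive entries of the cyclic sequence $f(v_1),f(v_2),\dots,f(v_k),f(v_1)$ are adjacent in $G$. So both statements are really just assertions about the $k$ pairs $\{f(v_i),f(v_{i+1})\}$ ($1\le i\le k-1$) together with the pair $\{f(v_k),f(v_1)\}$.

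For the forward direction, assume $f: V(C)\rightarrow V(G)$ is a homomorphism. For each $i\in\{1,\dots,k-1\}$ the set $\{v_i,v_{i+1}\}$ is an edge of $C$, so by the homomorphism property $\{f(v_i),f(v_{i+1})\}\in E(G)$, i.e. $f(v_i)\sim f(v_{i+1})$; likewise $\{v_k,v_1\}\in E(C)$ yields $f(v_k)\sim f(v_1)$. Hence every two consecutive entries of $f(v_1),f(v_2),\dots,f(v_k),f(v_1)$ are adjacent in $G$, which is exactly the statement that $f(v_1),\dots,f(v_k)$ is a closed walk in $G$.

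For the converse, suppose $f(v_1),\dots,f(v_k)$ is a closed walk in $G$, so that $f(v_i)\sim f(v_{i+1})$ for $1\le i\le k-1$ and also $f(v_k)\sim f(v_1)$. Every edge of $C$ is either of the form $\{v_i,v_{i+1}\}$ with $i<k$ or else is $\{v_k,v_1\}$, and in both cases the image pair is an edge of $G$ by the above; therefore $\{f(u),f(w)\}\in E(G)$ for every $\{u,w\}\in E(C)$, i.e. $f$ is a homomorphism from $C$ to $G$.

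There is essentially no obstacle here; the statement is a direct translation of definitions. The only points that warrant a moment's care are to treat the ``wrap-around'' edge $\{v_k,v_1\}$ of the cycle on exactly the same footing as the edges $\{v_i,v_{i+1}\}$, and to fix the convention that the closed walk associated with the sequence $f(v_1),\dots,f(v_k)$ implicitly returns to $f(v_1)$ at the end.
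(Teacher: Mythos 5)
Your proof is correct: it is the standard direct unwinding of the two definitions, with the wrap-around edge $\{v_k,v_1\}$ handled properly. The paper itself gives no proof of this proposition (it is quoted from the cited reference), so there is nothing to contrast; your argument is essentially the textbook one.
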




Now, we define the notion of $g$-factor and partial $g$-factor.

\begin{definition}\label{fpm}
\normalfont
Let $g: V(G)\longrightarrow \mathbb{R}^{\geq 0}$ be a function. The function $\gamma : E(G) \longrightarrow \mathbb{R}^{\geq 0}$ is called a  \textit{partial $g$-factor} if for every vertex $v \in V(G)$,  $\sum\limits_{e : e \ni v}\gamma(e) \leq g(v)$. Furthermore, 
$\gamma$ is called a $g$-\textit{factor} if for every vertex $v\in V(G)$, $\sum\limits_{e : e \ni v}\gamma(e) = g(v)$.
\end{definition}

Denote the constant function $\mathbbm{1}_G$ over the vertex set $V(G)$ by $\mathbbm{1}_G(v) = 1$. Any partial $\mathbbm{1}_G$-factor is called a \textit{fractional matching} and any $\mathbbm{1}_G$-factor is called \textit{fractional perfect matching}. Every fractional perfect matching $\gamma$, with $\gamma(e) \in \mathbb{Z}$, for all $e\in E(G)$, is an \textit{integral perfect matching} (or perfect matching for short).

\begin{remark}
In the literature of graph theory, a perfect matching is a subset $M$ of the edges such that every vertex belongs to exactly one of the edges in $M$. In this manuscript, we also call the characteristic function of the set $M$, a perfect matching. 
\end{remark}

The support of every perfect matching corresponds to a subset of the edge set of the graph such that no two of them share an endpoint. The converse is also true (see Figure $1$).

\begin{lemm}[\textbf{Convex Combination}]\label{conv lemma}
Suppose that $\gamma$ and $\gamma'$ are two $g$-factors. Then, every convex combination of $\gamma$ and $\gamma'$ is also a $g$-factor. 
\end{lemm}

\begin{prooff}

Let $v \in V(G)$ be an arbitrary vertex and $\lambda\in[0,1]$. Then,

\begin{align*}
    \sum\limits_{e:e \ni v}(\lambda\gamma(e) + (1-\lambda)\gamma'(e) )
&= \lambda\sum\limits_{e:e \ni v}\gamma(e) + (1-\lambda)\sum\limits_{e:e \ni v}\gamma'(e)\\
&= \lambda g(v) + (1 - \lambda) g(v) \\
&= g(v).
\end{align*}
The non-negativity condition is trivially satisfied.
\end{prooff}

\begin{figure}
\begin{center}
\includegraphics[width=90mm,scale=0.5]{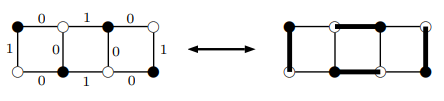}
\end{center}
\label{fig1}
\caption{An example of integral perfect matching}
\end{figure}

Let $M$ be a perfect matching of a graph $G$. A subset $S \subseteq M$ is called a \textit{forcing set} for $M$ if $M$ is the unique perfect matching of $G$ containing $S$.


\begin{definition}
\normalfont
Let $G$ be a graph and $M$ be any perfect matching of $G$. We define the quantities \textit{forcing number} of $M$, the \textit{forcing number} of $G$, the \textit{maximum forcing number} of $G$, and the \textit{spectrum} of the forcing numbers of $G$, respectively as follows: 

\begin{align*}
&f(G,M):= \min \{|S|:\text{$S$ is a forcing set for $M$}\}.\\
&f(G):= \min \{f(G,M): \text{$M$ is a perfect matching of $G$}\}.\\
&F(G):= \max \{f(G,M): \text{$M$ is a perfect matching of $G$}\}.\\
&\spec(G):= \{f(G,M): \text{$M$ is a perfect matching of $G$}\}.
\end{align*}
\end{definition}
Observe that $f(G)=\min\limits_{x\in \spec(G)}x$, and $F(G)=\max\limits_{x\in \spec(G)}x$.

\begin{proposition}[{\cite[Proposition 3]{RN32}}(Private communication with Noga Alon)]\label{pro5}
For any $\alpha < 1$, if $n$ is sufficiently large, then there exists a perfect matching $M$ of $Q_n$ with the forcing number at least $\alpha2^{n-1}$ i.e. $F(Q_n) \geq \alpha 2^{n-1}$.
\end{proposition}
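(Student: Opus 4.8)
The plan is to first reduce the statement for large $n$ to the same statement for one well-chosen dimension, and then to build that base case by an explicit construction.

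The basic tool is the standard description of forcing sets. For a perfect matching $M$ of a graph $G$, a set $S\subseteq M$ fails to force $M$ exactly when there is a perfect matching $M'\neq M$ with $S\subseteq M'$; since $M\,\triangle\,M'$ is then a disjoint union of $M$-alternating cycles none of which meets $S$, this happens iff some $M$-alternating cycle avoids $S$. Writing $T:=M\setminus S$, one checks that $S$ forces $M$ iff the subgraph $G[V(T)]$ induced on the vertices covered by $T$ has $T$ as its \emph{unique} perfect matching (a ``uniquely matchable'' graph). Hence
\[
 f(G,M)\;=\;|M|-\max\bigl\{\,|T|:T\subseteq M,\ G[V(T)]\text{ is uniquely matchable}\,\bigr\}.
\]
Since $|M|=2^{n-1}$ for $Q_n$, proving $F(Q_n)\ge\alpha 2^{n-1}$ amounts to exhibiting a perfect matching $M$ of $Q_n$ all of whose uniquely matchable sub-matchings $T$ satisfy $|T|\le(1-\alpha)2^{n-1}$.

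Next I would record a tiling monotonicity: $F(Q_n)/2^{n-1}\ge F(Q_k)/2^{k-1}$ for every $k\le n$. Partition $V(Q_n)$ into the $2^{n-k}$ facial subcubes $H_u$ obtained by fixing the last $n-k$ coordinates to $u\in\{0,1\}^{n-k}$; each $H_u$ spans a copy of $Q_k$, and no edge of $Q_n$ lying inside one $H_u$ touches another. Put a copy of a matching $M^{\ast}$ of $Q_k$ attaining $F(Q_k)$ inside every $H_u$, and let $M$ be the (perfect) union. If $S$ is a forcing set of $M$, then the edges of $S$ lying in $H_u$ must form a forcing set of that copy of $M^{\ast}$ --- otherwise replacing that copy by another perfect matching of $Q_k$ yields some $M'\neq M$ with $S\subseteq M'$ --- so $|S|\ge 2^{n-k}F(Q_k)$, proving the claim. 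Consequently it suffices to produce, for each $\alpha<1$, \emph{one} dimension $k$ with $F(Q_k)>\alpha 2^{k-1}$; the statement then holds for all $n\ge k$.

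It remains to construct such base cases, which is where the real work and the main obstacle lie. Plain tiling as above only reproduces the ratio $F(Q_k)/2^{k-1}$, so one needs an \emph{amplification}: a construction that, starting from a matching of $Q_k$ with forcing ratio $\rho<1$, produces a matching of a larger cube with ratio at least $g(\rho)$ for some $g$ with $g(\rho)>\rho$ on $[\tfrac12,1)$ (for instance $g(\rho)=1-(1-\rho)^2$), so that iterating from the trivial value $\rho=\tfrac12$ drives the ratio to $1$. A natural candidate uses the product decomposition $Q_{2k}=Q_k\times Q_k$ and interleaves two copies of the given matching across the two factors, so that the many extra short alternating cycles created by the product edges must be hit in addition to the within-factor obstructions. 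The crux is to show the resulting matching genuinely has the claimed forcing number, i.e.\ to bound the size of every uniquely matchable $T\subseteq M$; this is carried out via the ``peeling'' characterisation of uniquely matchable bipartite graphs (repeatedly delete a degree-$1$ vertex together with its neighbour), equivalently by showing the forcing process started from a small seed cannot propagate through the interleaved structure. Turning this propagation estimate into a strict gain at every step of the recursion, rather than merely preserving the ratio, is the main difficulty.
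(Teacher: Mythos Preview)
The paper does not prove this proposition at all; it merely quotes it from Riddle~\cite{RN32}. So there is no in-paper argument to compare against, but your proposal can still be assessed on its own, and it has a genuine gap.

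Your reduction is fine: the characterisation $f(G,M)=|M|-\max\{|T|:T\subseteq M,\ G[V(T)]\text{ uniquely matchable}\}$ is standard, and the tiling inequality $F(Q_n)/2^{n-1}\ge F(Q_k)/2^{k-1}$ is correct and reduces the problem to producing a single $k$ with $F(Q_k)>\alpha 2^{k-1}$. The difficulty is that you never actually produce such a $k$. Your ``amplification'' paragraph is a wish list, not an argument: you posit a gain function $g$ with $g(\rho)>\rho$, sketch an interleaved product construction on $Q_{2k}=Q_k\times Q_k$, and then admit that showing the forcing ratio strictly increases---rather than merely being preserved, which is all the tiling step gives---``is the main difficulty.'' Nothing in the proposal establishes that the interleaved matching has forcing number exceeding $2^k\cdot F(Q_k)$, and without that the recursion stalls at whatever ratio you start from (namely $1/2$). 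As written, the proof is incomplete at exactly the step that carries all the content.

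For context, the argument in the cited source is not constructive but a counting argument: one lower-bounds the number of perfect matchings of $Q_n$ (an $n$-regular bipartite graph on $2^n$ vertices) via permanent inequalities, and upper-bounds the number of perfect matchings that could have a forcing set of size below $\alpha 2^{n-1}$ by the number of such edge subsets; for large $n$ the first count outruns the second, so some matching must have large forcing number. If you want to salvage your approach you would need to actually prove an amplification inequality of the form $F(Q_{2k})\ge 2^k F(Q_k)+c\cdot 2^{2k-1}$ for some $c>0$, or abandon the constructive route and run the counting argument instead.
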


\begin{remark}
    In \cite{RN32}, the authors cited this result as a private communication with Noga Alon who first discovered it.
\end{remark}

\subsection{Perfect matching and fractional perfect matching polytope}\label{sec2.1}

A subset $C$ of $\mathbb{R}^n$ is said to be \textit{convex} if $\lambda x + (1 - \lambda)y$ belongs to $C$ for all $x, y \in C$ and $0 \leq \lambda \leq 1$.
The \textit{convex hull} of a set $X \subseteq \mathbb{R}^n$, denoted by $\conv(X)$, is the smallest
convex set containing $X$. A subset $\mathcal{P}$ of $\mathbb{R}^n$ is called a \textit{polyhedron} if it is the set of all solutions of a finite set of linear inequalities. A closed bounded polyhedron is called a \textit{polytope}. An equivalent definition is as follows. A polytope is the convex hull of finitely many points in $\mathbb{R}^n$. The equivalence of the two definitions is known as Minkowski-Weyl Theorem in the literature. For more detail about this theorem we refer the interested reader to Chapter 5 of \cite{RN250}.

Let $G=(V, E)$ be a graph. For every subset $M\subseteq E$, let $\mathbbm{1}_M$ to be a $0$-$1$ valued function which is defined as follows:

\[
 \mathbbm{1}_M(e) = 
  \begin{cases} 
   1, &  e \in M, \\
   0, &  e \notin M.
  \end{cases}
\] 

Notice that, for every $M\subseteq E$, $\mathbbm{1}_M$ can be visualized as a point in $\mathbb{R}^{|E|}$ with coordinates being $0$ or $1$. The function $\mathbbm{1}_M$ is called the \textit{characteristic function} (or characteristic vector when we view $\mathbbm{1}_M$ as a vector in $\mathbb{R}^{\left|E\right|}$) of $M$.

The convex hull of all the characteristic vectors of the perfect matchings of $G$ is called the \textit{perfect matching polytope} of $G$ i.e.   

$$ \mathcal{P}(G):= \conv\{\mathbbm{1}_M : \text{$M$ is a perfect matching of $G$}\}$$

It is straightforward to observe that fractional perfect matchings with integer coordinates are precisely the characteristic vectors of perfect matchings. The set of all fractional perfect matchings forms a polytope called \textit{fractional perfect matching polytope} of $G$ and is denoted by $\mathcal{P}_f(G)$ (see \cite{RN300}).

The vertices of $\mathcal{P}_f(G)$ have the following structure.
\begin{proposition}[{\cite[Theorem 30.2]{RN250}}]\label{edmlem}
Let $\alpha$ be a vertex of $\mathcal{P}_f(G)$. Then, 
\[
\alpha(e) = 
  \begin{cases} 
   1, &  e \in K, \\
   \frac{1}{2}, &  e \in C,\\
   0, & \text{otherwise.}
  \end{cases}
\] 
where $K$ is the union of some vertex-disjoint $K_2$ and $C$ is the union of some vertex-disjoint odd cycles.
\end{proposition}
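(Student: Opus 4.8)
The plan is to use the classical description of the fractional perfect matching polytope by a system of linear inequalities, and then apply the characterization of vertices as the unique solutions of a maximal linearly independent subsystem made tight. Recall that $\mathcal{P}_f(G)$ is defined by the constraints $\alpha(e) \geq 0$ for all $e \in E(G)$ together with $\sum_{e \ni v} \alpha(e) = 1$ for all $v \in V(G)$; this is precisely the fractional perfect matching polytope, as cited. First I would fix a vertex $\alpha$ of $\mathcal{P}_f(G)$ and let $H$ be the subgraph of $G$ induced by the edge set $\supp(\alpha) = \{e : \alpha(e) > 0\}$. Since $\alpha$ restricted to $H$ is a strictly positive $\mathbbm{1}_H$-factor, every vertex of $H$ has degree at least one in $H$.

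The key structural step is to argue that every vertex of $H$ has degree at most two. Suppose some vertex $v$ had three or more incident edges in $\supp(\alpha)$. I would then show $\alpha$ can be written as a proper convex combination of two distinct $\mathbbm{1}_G$-factors, contradicting that $\alpha$ is a vertex (an extreme point): one perturbs the values along a suitable closed walk or alternating structure through the ``branch'' vertices while keeping all vertex-sums equal to $1$ — this is exactly the standard argument that the only positive $g$-factors with no room to perturb are unions of edges and cycles. More precisely, if $H$ contains a vertex of degree $\geq 3$, or two cycles sharing a vertex, or a path joining two cycles, one can find a nonzero signed circulation-type vector $\delta$ supported on $\supp(\alpha)$ with $\sum_{e \ni v}\delta(e) = 0$ at every vertex, and then $\alpha \pm \varepsilon\delta$ are both in $\mathcal{P}_f(G)$ for small $\varepsilon$, so $\alpha$ is not a vertex. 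Hence $H$ is a disjoint union of paths and cycles.

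Next I would rule out paths of length $\geq 2$ and even cycles. On a path $v_0 v_1 \cdots v_k$ with $k \geq 2$, the endpoint constraint forces $\alpha(v_0 v_1) = 1$, then the constraint at $v_1$ forces $\alpha(v_1 v_2) = 0$, contradicting $v_1 v_2 \in \supp(\alpha)$; so every path component is a single edge $K_2$ with $\alpha$-value $1$. On a cycle, the equations $\sum_{e \ni v}\alpha(e) = 1$ along consecutive vertices force the values to alternate $t, 1-t, t, 1-t, \dots$; if the cycle is even this one-parameter family again shows $\alpha$ is not a vertex (perturb $t$), while if the cycle is odd, consistency around the cycle forces $t = 1-t$, i.e. $t = \tfrac12$, and every edge of the cycle gets value $\tfrac12$. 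Collecting the components gives $\alpha(e) = 1$ on a union of vertex-disjoint $K_2$'s (call it $K$), $\alpha(e) = \tfrac12$ on a union of vertex-disjoint odd cycles (call it $C$), and $\alpha(e) = 0$ elsewhere, which is the claimed form.

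The main obstacle is making the perturbation argument in the middle step fully rigorous for the general ``caterpillar of cycles'' structure — i.e. precisely constructing the signed vector $\delta$ in each case (a branch vertex, two vertex-disjoint even-length situations, or a path linking two odd cycles, etc.) and verifying it has zero vertex-sums and stays within the nonnegativity constraints after a small perturbation. One clean way to package this is: a connected graph in which every vertex has degree in $\{1,2\}$ is a path or a cycle, so I only need to handle those two shapes, and the delicate case reduces to showing that on an even cycle the alternating family $t \mapsto (t,1-t,\dots)$ gives a whole segment of $\mathcal{P}_f(G)$ through $\alpha$, hence $\alpha$ is not extreme — this is a short explicit computation. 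I would also remark that Proposition~\ref{edmlem} is quoted from \cite{RN250}, so in the write-up I may simply cite it; but the self-contained proof above via Lemma~\ref{conv lemma} (to check $\alpha \pm \varepsilon\delta \in \mathcal{P}_f(G)$) and the extreme-point characterization is the natural route.
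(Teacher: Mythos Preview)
The paper does not prove this proposition; it is quoted verbatim from \cite{RN250} and used as a black box (notably in Corollary~\ref{th21}). So there is no ``paper's own proof'' to compare against.

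Your argument is the standard one and is essentially correct. One small imprecision worth fixing: the bald claim ``if some vertex of $H$ has degree $\geq 3$ then one can perturb'' is not literally how the reduction goes, because a single branch vertex need not by itself produce a nonzero $\delta$ with zero vertex-sums supported on $\supp(\alpha)$ (e.g.\ an odd cycle with a pendant tree attached --- there the contradiction comes from your leaf-propagation step, not from a perturbation at the branch vertex). The cleanest packaging is the rank formulation you already gesture at: $\alpha$ is a vertex iff the incidence matrix of $H=\supp(\alpha)$ has full column rank, which forces every bipartite component of $H$ to be a tree and every non-bipartite component to be unicyclic with an odd cycle; your leaf argument then trims trees down to $K_2$'s and pendant-free unicyclic components down to bare odd cycles, and the even-cycle perturbation and odd-cycle $t=1-t$ computation finish exactly as you wrote. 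With that reorganization the proof is complete, and in the write-up you may of course simply cite \cite{RN250} as the paper does.
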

 Since a bipartite graph has no odd cycle, Proposition \ref{edmlem} implies the following statement.

\begin{proposition}[{\cite[Theorem P]{RN300}}]\label{pro6}
If $G$ is bipartite, then $\mathcal{P}_f(G) = \mathcal{P}(G)$.
\end{proposition}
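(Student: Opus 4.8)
The plan is to prove the two inclusions $\mathcal{P}(G)\subseteq\mathcal{P}_f(G)$ and $\mathcal{P}_f(G)\subseteq\mathcal{P}(G)$ separately; only the second uses bipartiteness, and it is an immediate consequence of Proposition \ref{edmlem}.

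First I would check $\mathcal{P}(G)\subseteq\mathcal{P}_f(G)$, which holds for every graph. Each perfect matching $M$ is in particular a $\mathbbm{1}_G$-factor, so $\mathcal{X}^M\in\mathcal{P}_f(G)$. By Lemma \ref{conv lemma} (with $g=\mathbbm{1}_G$), $\mathcal{P}_f(G)$ is closed under convex combinations, hence convex; since it contains every point $\mathcal{X}^M$, it contains their convex hull $\mathcal{P}(G)$.

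For the reverse inclusion, I would use that $\mathcal{P}_f(G)$ is a polytope, so it equals the convex hull of its vertices. Let $\alpha$ be a vertex of $\mathcal{P}_f(G)$. By Proposition \ref{edmlem}, $\alpha$ equals $1$ on a union $K$ of vertex-disjoint $K_2$'s, $\tfrac12$ on a union $C$ of vertex-disjoint odd cycles, and $0$ elsewhere. Since $G$ is bipartite it has no odd cycle, so $C=\varnothing$ and $\alpha=\mathcal{X}^K$ is integral. As $\alpha$ is a $\mathbbm{1}_G$-factor, $\sum_{e\ni v}\alpha(e)=1$ at every vertex $v$, which forces $K$ to meet each vertex exactly once, i.e.\ $K$ is a perfect matching of $G$. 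Hence every vertex of $\mathcal{P}_f(G)$ belongs to $\{\mathcal{X}^M : M \text{ a perfect matching of } G\}$, and taking convex hulls gives $\mathcal{P}_f(G)\subseteq\mathcal{P}(G)$.

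Combining the two inclusions yields $\mathcal{P}_f(G)=\mathcal{P}(G)$. The only substantive ingredient is the vertex characterization of Proposition \ref{edmlem}; the rest is bookkeeping, so I do not expect a real obstacle. The one point to state carefully is that an integral vertex of $\mathcal{P}_f(G)$ is the characteristic vector of a \emph{perfect} matching (not merely of a matching), which is exactly what the equality constraint defining a $\mathbbm{1}_G$-factor provides.
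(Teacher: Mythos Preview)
Your proposal is correct and follows exactly the route the paper indicates: the paper does not spell out a proof but simply remarks, immediately after Proposition~\ref{edmlem}, that since a bipartite graph has no odd cycle the statement ``trivially holds.'' Your argument is precisely the unpacking of that remark, using Proposition~\ref{edmlem} to conclude that every vertex of $\mathcal{P}_f(G)$ is integral and hence the characteristic vector of a perfect matching.
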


Let $g: V(G) \longrightarrow \mathbb{R}^{\geq 0}$ be a function. The set of all $g$-factors is called the $g$-\textit{polytope} of $G$. 
\begin{proposition}\label{cor9}
Let $g: V(G)\longrightarrow \mathbb{R}^{\geq 0}$ be a function. Then, every $g$-polytope is a polytope.
\end{proposition}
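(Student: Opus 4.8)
The claim is that every $g$-polytope is in fact a polytope. The plan is to exhibit the set of $g$-factors as a bounded set cut out by finitely many linear equalities and inequalities, and then invoke the standard equivalence between bounded polyhedra (the $\mathcal{H}$-description) and convex hulls of finitely many points (the $\mathcal{V}$-description), which is what the paper's definition of ``polytope'' requires. Concretely, sitting inside $\mathbbm{R}^{|E(G)|}$, the $g$-polytope is $\{\gamma \in \mathbbm{R}^{|E(G)|} : \gamma(e) \geq 0 \text{ for all } e \in E(G), \ \sum_{e \ni v}\gamma(e) = g(v) \text{ for all } v \in V(G)\}$. This is the intersection of finitely many closed half-spaces (one $\gamma(e)\ge 0$ per edge, and each equality split as two inequalities), hence a polyhedron; Lemma \ref{conv lemma} already records that it is convex, though that is subsumed by being a polyhedron.

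The one genuine point to check is boundedness, since a polyhedron is the convex hull of finitely many points precisely when it is bounded (the Minkowski--Weyl theorem). Here boundedness is immediate: for any edge $e = \{u,v\}$ and any $g$-factor $\gamma$, nonnegativity of the other terms in the sum at $v$ gives $0 \le \gamma(e) \le \sum_{e' \ni v}\gamma(e') = g(v)$, so every coordinate is confined to the interval $[0, \min(g(u),g(v))]$ and the whole set lies in a box. Thus the $g$-polytope is a bounded polyhedron, and by Minkowski--Weyl it equals the convex hull of its (finitely many) vertices, which is exactly the definition of a polytope given in Section \ref{sec2.1}.

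A remark on how Lemma \ref{conv lemma} enters: the statement says the proposition ``is a consequence of'' that lemma, but strictly speaking the lemma only gives convexity, not the finite-generation property. So I would phrase the argument as: convexity comes from Lemma \ref{conv lemma} (or directly from the linear description), and then the extra ingredient is the boundedness estimate above together with the Minkowski--Weyl theorem to upgrade ``bounded convex polyhedron'' to ``convex hull of finitely many points.'' If one wishes to avoid quoting Minkowski--Weyl as a black box, one can instead note that a bounded polyhedron has finitely many vertices and is the convex hull of them; this is the form in which the fact is usually packaged for matching polytopes (cf. the results of Edmonds quoted as Proposition \ref{edmlem}).

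The main obstacle, such as it is, is purely expository: deciding how much of the Minkowski--Weyl correspondence to take for granted. There is no real mathematical difficulty — boundedness is a one-line estimate and the linear description is read off directly from Definition \ref{fpm}. I would therefore keep the proof to a few sentences: write down the $\mathcal{H}$-description, observe each coordinate lies in $[0,g(v)]$ for an endpoint $v$, conclude the set is a bounded polyhedron, and cite the standard fact that such a set is the convex hull of finitely many points.
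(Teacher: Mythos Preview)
Your proof is correct. The paper does not actually supply a proof of this proposition; it merely asserts that it ``is a consequence of Lemma \ref{conv lemma}'' and moves on. As you rightly point out, Lemma \ref{conv lemma} only establishes convexity (closure under convex combinations), which by itself does not yield that the set is the convex hull of \emph{finitely many} points. Your argument fills this gap cleanly: the $\mathcal{H}$-description is immediate from Definition \ref{fpm}, the boundedness estimate $0 \le \gamma(e) \le g(v)$ for an endpoint $v$ is the genuine content, and Minkowski--Weyl then converts the bounded polyhedron into a polytope in the paper's sense. So your version is not so much a different route as a completion of what the paper leaves implicit; the expository remark you make about the role of Lemma \ref{conv lemma} is well taken.
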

\begin{prooff}

First notice that any $g$-polytope of $G$ consists of the points in the Euclidean space that satisfy a set of equalities (defining equations of the corresponding $g$-factors) and inequalities (non-negativity of each coordinate). By definition, this implies that every $g$-polytope is a polyhedron. Also, because non of the defining inequalities of a $g$-polytope is strict inequality, hence $g$-polytopes are closed sets. Finally, the nonnegativity condition in the definition of $g$-polytopes imply that a $g$-polytope lies on the positive orthant of the space. Thus, every entry of a point in a $g$-polytope is bounded above by the maximum of $g(v)$ where the maximum is taken over the set of all the vertices of the graph. Thus, $g$-polytopes are bounded. Hence, $g$-polytopes are indeed polytopes. 
\end{prooff}

\section{Main Result}\label{sec:MR}

\begin{definition}
\normalfont
Let $G$ be a graph and $\alpha, \alpha': E(G) \longrightarrow \mathbb{R}^{\geq 0}$ be two functions. Define the relation 
``$\preceq$'' as follows:
$$ \alpha \preceq \alpha'\Leftrightarrow \forall e \in E(G) : \alpha(e) \leq \alpha'(e).$$
When $\alpha \preceq \alpha'$ and $\alpha \neq \alpha'$, we write $\alpha \prec \alpha'$.
One can easily observe that indeed, $\preceq$ is a partial order on the set $({\mathbb{R}^{\geq 0}})^E$.

If $\alpha : E(G) \longrightarrow \mathbb{R}$, define $\|\alpha\|_1 := \sum\limits_{e\in E(G)}|\alpha(e)|$ .

 Let $g: V(G)\longrightarrow \mathbb{R}^{\geq 0}$ be a function, $\alpha$ be a partial $g$-factor, and $\gamma$ be a $g$-factor in a graph $G$. We say $\alpha$ is \textit{$g$-extendable} (or simply extendable if $g$ is clear from the context) to $\gamma$ if $\alpha \preceq \gamma$. In this case, we say $\gamma$ is a $g$-\textit{extension} (or extension, when $g$ is clear from the context) of $\alpha$. 
 
 A function $\alpha$ is a \textit{forcing function} for $\gamma$ if $\alpha$ is uniquely $g$-extendable to the $g$-factor $\gamma$ (i.e. $\alpha \preceq \gamma$ and $\gamma$ is the unique extension of $\alpha$). In this situation, we write $\alpha \uparrow \gamma$. The forcing function $\alpha$ is a \textit{minimal forcing function} for $\gamma$ if $\alpha \uparrow \gamma$, and if $\alpha'\preceq \alpha$ and $\alpha'\uparrow \gamma$, then $\alpha=\alpha'$. In this case, we write $\alpha \Uparrow \gamma$. If $\alpha \Uparrow \gamma$ and for every $\alpha'$ with $\alpha'\uparrow \gamma$ we have $\sum\limits_{e\in E(G)} \alpha(e) \leq \sum\limits_{e\in E(G)}\alpha'(e)$, we call $\alpha$ a \textit{minimum forcing function} for $\gamma$.
\end{definition}
\begin{remark}
Notice that minimum and minimal forcing functions for a $g$-factor $\gamma$ are not necessarily unique. However, for two minimum forcing functions $\alpha,\alpha'$ of $\gamma$ we have $\sum\limits_{e\in E(G)} \alpha(e) = \sum\limits_{e\in E(G)}\alpha'(e)$. Also, one can easily see that a minimum forcing function of $\gamma$ is a minimal forcing function. The converse is not necessarily true.
\end{remark}

\begin{lemm}\label{lem:minimalff}
Let $G$ be a graph,  $g: V(G)\longrightarrow \mathbb{R}^{\geq 0}$ be a function, and $\gamma$ be a $g$-factor. If $\alpha\uparrow \gamma$, then there exists a partial $g$-factor $\alpha'$ such that $\alpha'\preceq \alpha$ and $\alpha'\Uparrow \gamma$.
\end{lemm}
\begin{prooff}

Let $\Gamma_{\alpha}:=\{\beta: \beta \textit{ is a partial } g \textit{-factor }, \beta\preceq \alpha, \beta\uparrow \gamma \}$. We first utilize Zorn'e lemma to conclude that $\Gamma_{\alpha}$ has a minimal element with respect to $\preceq$ ordering. Then, we show that the minimal element of $\Gamma_{\alpha}$ satisfies the requirements of the lemma.

According to Zorn's lemma, we only need to show that every non-increasing chain in $\Gamma_{\alpha}$ has a lower bound in the set. Let $\alpha \succeq \alpha_1 \succeq \alpha_2 \succeq \alpha_3 \ldots$ be a non-increasing chain of partial $g$-factors in $\Gamma_{\alpha}$. Define the vector $\alpha_0\in (\mathbb{R}^{\geq 0})^E$ as follows. For every $e\in E(G)$, let
\[
\alpha_0(e):=\lim\limits_{i\to \infty} \alpha_i(e).
\]
Note that for a fixed edge $e$, the sequence $\alpha_i(e)$ is a non-increasing and bounded sequence. Hence the limit exists. Also, it is evident that for all values of $i$, we have $\alpha_0\preceq \alpha_i$. Consequently, $\alpha_0$ is a partial $g$-factor. We show that $\alpha_0\uparrow \gamma$. Notice that this claim shows that the chain  $\alpha_1 \succeq \alpha_2 \succeq \alpha_3 \ldots$ has a lower bound, namely $\alpha_0$ within the set $\Gamma_{\alpha}$. For every nonnegative integer index $i$, define 
\[
F_i:=\{e\in E(G): \alpha_i(e) < \gamma(e)\}.
\]
If $e\in F_i$, then for every $j>i$ we have $\alpha_j(e)\geq \alpha_i(e)>\gamma(e)$. Thus, we have $F_1\subseteq F_2\subseteq F_3\ldots$. By the same argument, we have $F_i\subseteq F_0$ for all indices $i$. 

If $e\in F_0$, then $\alpha_0(e)<\gamma(e)$, and since $\alpha_0(e)$ is defined as the limit of $\alpha_i(e)$, there exists an index $j$, depending on $e$, such that $\alpha_j(e)<\gamma(e)$. Thus, $e$ belongs to all but at most finitely many $F_i$'s. Equivalently, for every element $e$ of $F_0$, there are at most finitely many index $i$ such that $e\notin F_i$. Since $G$ has finitely many edges, there exists a positive index $j$ such that $F_0\subseteq F_j$. We also mentioned earlier that $F_j\subseteq F_0$. Thus, $F_0=F_j$. 

If $\alpha_0$ is not uniquely extendable to $\gamma$, then there exists a $g$-factor $\gamma'$ distinct from $\gamma$ such that $\alpha_0\preceq \gamma'$. We show that there exists a sufficiently small positive $\varepsilon$ such that $\alpha_j \preceq (1-\varepsilon)\gamma + \varepsilon \gamma'$. 

To see this, observe that when $e\in F_j=F_0$, we have $\alpha_j(e)<\gamma(e)$. Hence for sufficiently small $\varepsilon$, we have $\alpha_j (e) < (1-\varepsilon)\gamma (e)+ \varepsilon \gamma'(e)$. When $e\notin F_j=F_0$, then $\alpha_j(e)=\alpha_0(e)=\gamma(e)$. Since $\alpha_0\preceq \gamma'$, we have $\alpha_0(e)\leq \gamma'(e)$. Therefore, $\alpha_j(e)\leq (1-\varepsilon)\gamma (e)+ \varepsilon \gamma'(e)$. Thus, regardless of whether or not $e\in F_j$, we have $\alpha_j(e)\leq (1-\varepsilon)\gamma (e)+ \varepsilon \gamma'(e)$. Consequently, $\alpha_j\preceq (1-\varepsilon) \gamma+\varepsilon \gamma'$. 

On the other hand, since $\gamma'$ is a $g$-factor distinct from $\gamma$ and $\varepsilon $ is positive, $(1-\varepsilon) \gamma+\varepsilon \gamma'$ is a $g$-factor distinct from $\gamma$. The conclusion that $\alpha_j\preceq (1-\varepsilon)  \gamma+\varepsilon \gamma'$ contradicts the assumption that $\alpha_j \uparrow \gamma$. This contradiction shows that $\alpha_0\uparrow \gamma$, i.e. the chain $\alpha_1 \succeq \alpha_2 \succeq \alpha_3 \ldots$ has a lower bound in $\Gamma_{\alpha}$. By Zorn's lemma, $\Gamma_{\alpha}$ has a minimal element, say $\alpha'$. According to the definition, it follows that $\alpha'\Uparrow \gamma$ and $\alpha'\preceq \alpha$. 
\end{prooff}

\medskip
\begin{coroll}\label{cor:minimalexists}
For every $g$-factor $\gamma$, there exists a partial $g$-factor $\alpha'$ such that $\alpha'\Uparrow \gamma$. 
\end{coroll}

\begin{prooff}

In the previous lemma, let $\alpha$ be the same as $\gamma$. The conclusion immediately follows.
\end{prooff}

\medskip

\begin{observation}\label{obs10}
If $\alpha \uparrow \gamma$ and $\alpha \preceq \beta \preceq \gamma$ then $\beta \uparrow \gamma$.
\end{observation}

\begin{prooff}

Since $\beta \preceq \gamma$, if $\beta$ does not uniquely extend to $\gamma$, then there exists another $g$-factor $\gamma'$ such that $\beta \preceq \gamma'$. But this implies that $\alpha \preceq \beta \preceq \gamma'$, which contradicts the assumption $\alpha \uparrow \gamma$.
\end{prooff}

\begin{definition}
\normalfont
In an extension $\gamma$ of the partial $g$-factor $\alpha$, an edge $e$ is called $saturated$ if $\alpha(e) = \gamma(e)$.
\end{definition}

The next lemma is a useful tool when we study the structure of minimal forcing functions of $g$-factors.
\begin{lemm}[\textbf{Saturated Edges}]\label{satlem}
Let $G$ be a graph, $\gamma$ be a $g$-factor, and $\alpha \Uparrow \gamma$. Then, for every edge $e\in E(G)$, $\alpha(e) \in \{0, \gamma(e)\}$.
\end{lemm}

The above lemma implies that the only way one can extend a minimal uniquely extendable partial $g$-factor $\alpha$ to a $g$-factor $\gamma$ is by increasing the value of $\alpha$ on the edges whose values are $0$; i.e. $\alpha(e) = 0$. 

\begin{myproof} {Lemma {\ref{satlem}}}
Let $e$ be an arbitrary edge of $G$. First note that $\alpha \Uparrow \gamma$ and therefore $\alpha(e) \leq \gamma(e)$.
We claim that $\alpha(e) \in \{0, \gamma(e)\}$. On the contrary, suppose that $0 < \alpha(e) < \gamma(e)$. 
Let $\alpha'$ be a partial $g$-factor that agrees with $\alpha$ on every edge except $e$ and also $\alpha'(e) = 0$. Clearly $\alpha'\prec \alpha$. Thus, $\alpha'$ is also $g$-extendable to $\gamma$. On the other hand, since $\alpha \Uparrow \gamma$ and $\alpha'\prec \alpha$, we conclude that $\alpha'$ is not a forcing function for $\gamma$. That is, $\alpha'$ is also $g$-extendable to a different $g$-factor, say $\gamma'$ .

Now, we show that there exists a sufficiently small $t\in(0,1)$ such that $\alpha$ is $g$-extendable to $t\gamma'+ (1-t)\gamma$.
Notice that, by Lemma \ref{conv lemma}, we know that  $t\gamma'+ (1-t)\gamma$ is a $g$-factor. 

First, take $e'$ to be any edge in $E(G)\backslash \{e\}$. By the definition of $\alpha'$ and the fact that $\alpha'\prec \gamma,\gamma'$ we have
\begin{align*}
\alpha(e') = \alpha'(e') = t\alpha'(e') + (1-t)\alpha'(e') \leq t\gamma'(e') + (1-t)\gamma(e'). 
\end{align*}
For the edge $e$, since $\gamma'(e)$ is finite and $0< \alpha(e) < \gamma(e)$, there exists a sufficiently small $t$ such that $\alpha(e) < t\gamma'(e) + (1-t) \gamma(e)$. Therefore, for every edge $e'\in E(G)$ if $t$ is small enough, then 
$\alpha(e') < t\gamma'(e') + (1-t) \gamma(e')$. This implies that $\alpha \prec t\gamma'+ (1-t) \gamma \neq \gamma$. This is a contradiction to the assumption $\alpha \Uparrow \gamma$. 
\end{myproof}

\begin{coroll}
\label{cor:finitelymanyminimal}
Let $G$ be a graph, $\gamma$ be a $g$-factor. Then, there are finitely many partial $g$-factor $\alpha$ such that $\alpha\Uparrow \gamma$. 
\end{coroll}

\begin{prooff}

By the previous Lemma, if $\alpha\Uparrow \gamma$, then $\alpha(e)$ has at most $2$ possibilities, namely $0$ and $\gamma(e)$. Since $G$ has finitely many edges, the assertion follows.
\end{prooff}

\begin{theorem}\label{th11}
Suppose that $\gamma$ and $\gamma'$ are two $g$-factors. If $\supp(\gamma) = \supp(\gamma')$ and $\alpha \Uparrow \gamma$, then
$\alpha'\Uparrow \gamma'$, where $\alpha'$ is defined as follows:
\[
 \alpha'(e) = 
  \begin{cases} 
   \gamma'(e), &  \alpha(e) = \gamma(e), \\
    0,       &  \text{otherwise.}
  \end{cases}
\]
\end{theorem}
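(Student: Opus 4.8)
The plan is to verify the two requirements in the definition of $\alpha'\Uparrow\gamma'$ separately: that $\alpha'$ is a forcing function for $\gamma'$, and that it is minimal with this property. Throughout, write $S=\{e\in E(G):\alpha(e)=\gamma(e)\}$ and $T=E(G)\setminus S$, so that $\alpha'=\gamma'$ on $S$ and $\alpha'=0$ on $T$. Then $\alpha'\preceq\gamma'$ is immediate (in particular $\alpha'$ is a partial $g$-factor, being dominated by one), and Lemma~\ref{satlem} gives $\alpha(e)=0<\gamma(e)$ for every $e\in T$, whence $T\subseteq\supp(\gamma)=\supp(\gamma')$. Both requirements will be handled by the perturbation device used in the proof of Lemma~\ref{satlem}: take a rival $g$-factor, transport the direction pointing toward it back to a neighbourhood of $\gamma$ (resp.\ of $\gamma'$) using that $\gamma$ and $\gamma'$ share support, and use finiteness of $E(G)$ to keep the resulting combination feasible.

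First requirement. Let $\delta'$ be a $g$-factor with $\alpha'\preceq\delta'$, and for small $t>0$ set $\eta_t:=\gamma+t(\delta'-\gamma')$. Summing the incident values at any vertex $v$ gives $g(v)+tg(v)-tg(v)=g(v)$, so $\eta_t$ meets the degree constraints. On $S$, $\delta'(e)\ge\alpha'(e)=\gamma'(e)$ forces $\eta_t(e)\ge\gamma(e)=\alpha(e)\ge0$; on $T$, $\gamma(e)>\alpha(e)$, so for $t$ small enough (finitely many edges) $\eta_t(e)\ge\alpha(e)\ge0$. Hence $\eta_t$ is a $g$-factor with $\alpha\preceq\eta_t$, and $\alpha\uparrow\gamma$ gives $\eta_t=\gamma$, i.e.\ $t(\delta'-\gamma')=0$, so $\delta'=\gamma'$. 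Thus $\gamma'$ is the unique $g$-extension of $\alpha'$.

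Second requirement. Suppose $\beta'\preceq\alpha'$ with $\beta'\neq\alpha'$; I will produce a second $g$-extension of $\beta'$. Choose $e_0$ with $\beta'(e_0)<\alpha'(e_0)$; since $\beta'=\alpha'=0$ on $T$, this forces $e_0\in S$ with $0\le\beta'(e_0)<\gamma'(e_0)$, so $e_0\in\supp(\gamma')=\supp(\gamma)$ and $\alpha(e_0)=\gamma(e_0)>0$. Let $\alpha_0$ coincide with $\alpha$ off $e_0$ and vanish at $e_0$; then $\alpha_0\prec\alpha$, so by minimality of $\alpha$ (i.e.\ $\alpha_0$ is not a forcing function for $\gamma$, although $\alpha_0\preceq\gamma$) there is a $g$-factor $\delta\neq\gamma$ with $\alpha_0\preceq\delta$. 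For small $s>0$ put $\delta':=\gamma'+s(\delta-\gamma)$. It satisfies the degree equalities; it is nonnegative for small $s$, since $\gamma'(e)>0$ on $\supp(\gamma)=\supp(\gamma')$ while off $\supp(\gamma)$ we have $\gamma'(e)=\gamma(e)=0$ and $\delta'(e)=s\delta(e)\ge0$; and $\delta'\neq\gamma'$. Finally $\beta'\preceq\delta'$: on $T$ this is $0\le\delta'(e)$; on $S\setminus\{e_0\}$, $\delta(e)\ge\alpha_0(e)=\gamma(e)$ yields $\delta'(e)\ge\gamma'(e)\ge\alpha'(e)\ge\beta'(e)$; and $\delta'(e_0)\to\gamma'(e_0)>\beta'(e_0)$ as $s\to0^{+}$, so $\delta'(e_0)>\beta'(e_0)$ for $s$ small. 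Hence $\beta'$ has two distinct $g$-extensions and is not a forcing function for $\gamma'$; therefore $\alpha'$ is minimal, and $\alpha'\Uparrow\gamma'$.

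The bookkeeping "finitely many edges, so shrink the perturbation parameter" is routine. The point that genuinely needs care is the choice of perturbation direction together with the exact role of the hypotheses: one must move $\gamma$ in the direction $\delta'-\gamma'$ (resp.\ $\gamma'$ in the direction $\delta-\gamma$), and it is precisely the equality $\supp(\gamma)=\supp(\gamma')$ — which keeps the perturbed vector nonnegative — together with the fact that $\delta$ dominates $\alpha$ on all of $S\setminus\{e_0\}$ that keeps $\alpha$ (resp.\ $\beta'$) below the perturbed factor; the strict inequality $\beta'(e_0)<\gamma'(e_0)$ is what supplies the slack needed at $e_0$.
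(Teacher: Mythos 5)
Your proof is correct. The uniqueness half is essentially the paper's own argument: given a $g$-factor $\delta'\succeq\alpha'$, you perturb along $\eta_t=\gamma+t(\delta'-\gamma')$, use Lemma \ref{satlem} together with $\supp(\gamma)=\supp(\gamma')$ to keep $\eta_t$ nonnegative and above $\alpha$ for small $t>0$, and invoke $\alpha\uparrow\gamma$ (the paper phrases this as a contradiction with an implicit $\gamma''\neq\gamma'$; your direct conclusion $\delta'=\gamma'$ is the same computation). The minimality half is where you genuinely diverge. The paper takes a hypothetical forcing function $\alpha''\preceq\alpha'$ for $\gamma'$, passes to a minimal one so that Lemma \ref{satlem} applies to it, transports it back to a function $\theta$ on the $\gamma$-side, and re-runs the uniqueness argument with the roles of $\gamma$ and $\gamma'$ interchanged, contradicting the minimality of $\alpha$ (leaving the needed fact $\theta\neq\alpha$ implicit). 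You instead show directly that every $\beta'\preceq\alpha'$ with $\beta'\neq\alpha'$ fails to force $\gamma'$: you zero out $\alpha$ at a single edge $e_0$ where $\beta'(e_0)<\alpha'(e_0)=\gamma'(e_0)$, use minimality of $\alpha$ to obtain an alternative extension $\delta\neq\gamma$ of the weakened function (the same device as in the proof of Lemma \ref{satlem}), and push it forward to the second extension $\delta'=\gamma'+s(\delta-\gamma)$ of $\beta'$. This buys two small things: you never need to assume, or establish the existence of, a minimal forcing function below $\alpha''$, and the strictness driving the contradiction -- the slack $\beta'(e_0)<\gamma'(e_0)$ at the chosen edge -- is made explicit, whereas the paper leaves the corresponding verification to the reader. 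The paper's route is in turn slightly more economical, since its second half is literally its first half applied in the reverse direction.
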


In words, Theorem \ref{th11} says that if $\alpha$ is a minimal forcing function for some $g$-factor $\gamma$, then if we alter $\gamma$ on some of the edges to get a new $g$-factor $\gamma'$ while preserving the support, then the same alteration will turn $\alpha$ to a minimal forcing function for the resulting $g$-factor $\gamma'$.
\\
\begin{myproof} {Theorem \ref{th11}}
We prove the theorem in two steps. In the first step, we show that $\alpha'$ is uniquely extendable to $\gamma'$. Then, in the second step, we prove that $\alpha'\Uparrow \gamma'$.
\subsubsection*{Step1}
In the first step, we show that $\alpha'\uparrow \gamma'$. From the definition of $\alpha'$, it is clear that $\alpha'\preceq \gamma'$. Suppose that $\alpha'$ is not uniquely $g$-extendable to $\gamma'$. Thus, there exists a $g$-factor $\gamma''$, different from $\gamma'$, such that $\alpha'\preceq \gamma''$. Next, we show that there exists a $g$-factor $\eta$, different from $\gamma$, such that $\alpha \preceq \eta$. This contradicts the fact that $\alpha$ is uniquely $g$-extendable to $\gamma$. We first construct $\eta$ and show that it is indeed a $g$-factor. Let
$$ \eta = \gamma + \varepsilon (\gamma'' - \gamma') $$ where $\varepsilon$ is a sufficiently small positive real number.

To see that $\eta$ is a $g$-factor, according to the definition of $g$-factors, we must prove two statements. First, we must show the non-negativity condition, that is, $\eta(e)\geq 0$ for every edge $e$. Secondly, we must show that for every vertex $v$ we have
\begin{equation}\label{eq:eta}
\sum\limits_{e:e \ni v} \eta(e) = g(v).
\end{equation}
For the non-negativity condition, notice that by the choice of $\varepsilon$, the only possibility for a negative $\eta(e)$ is when $\gamma(e)=0$ and $\gamma''(e)-\gamma'(e) <0$. But this will never happen since if $\gamma(e)=0$ then $e\notin \supp(\gamma)=\supp(\gamma')$. Therefore, $\gamma'(e)=0$ and consequently, $\gamma''(e)-\gamma'(e)\geq 0$.

To show that Eq.~(\ref{eq:eta}) holds, notice that
$$
\sum\limits_{e:e \ni v} \eta(e) = \sum\limits_{e:e \ni v} \gamma (e)+\varepsilon\big(\sum\limits_{e:e \ni v}\gamma''(e)- \sum\limits_{e:e \ni v}\gamma'(e)\big) =  g(v) + \varepsilon \big(g(v)-g(v)\big)=g(v).
$$
In the above, the first equality follows from the definition of $\eta$ and the second one is implied by the assumption that $\gamma, \gamma'$ and $\gamma''$ are all $g$-factors.

Now, we show that for every edge $e$, we have $\alpha(e)\leq \eta(e)$. To this end, we consider the following cases.

\begin{itemize}
\item{Case 1:}  $\alpha'(e)=0$, and $\gamma'(e)\neq 0$. 

According to the definition of $\alpha'$, the value of $\alpha'(e)$ is obtained from the second condition (i.e.``otherwise'' condition) but not from the first one.  Therefore, $\alpha(e)<\gamma(e)$. Thus, if $\varepsilon$ is small enough, still we have $\alpha(e)<\gamma(e)+ \varepsilon(\gamma''(e)-\gamma'(e))$. However, the right-hand side is simply $\eta(e)$. That is, $\alpha(e)< \eta(e)$, provided that $\varepsilon$ is small enough.
\item{Case 2:} $\alpha'(e)=\gamma'(e)=0$.

In this case, $\eta(e) = \gamma(e) + \varepsilon\big(\gamma''(e)-\gamma'(e)\big) = \gamma(e) + \varepsilon \gamma''(e) \geq \gamma(e)\geq \alpha(e)$.
\item{Case 3:} $\alpha'(e)\neq 0$.

According to the definition of $\alpha'$, we know that $0\neq \alpha'(e) = \gamma'(e)$, and also in this case, we should have $\alpha(e)=\gamma(e)$. 
Therefore, by substituting $\alpha'(e) = \gamma'(e)$ and $\alpha(e)=\gamma(e)$ we obtain that 

$$
\eta(e)= \gamma(e)+\varepsilon (\gamma''(e)-\gamma'(e)) = \alpha (e) +\varepsilon (\gamma''(e)-\alpha'(e)).
$$
Note that the term $\gamma''(e)-\alpha'(e)$ is non-negative since $\alpha'\preceq \gamma''$. Therefore, $\eta (e)\geq \alpha(e)$.


\end{itemize}

Thus, $\alpha \preceq \eta$. This contradicts the fact that $\alpha$ is uniquely $g$-extendable to $\gamma$. 

\subsubsection*{Step 2}
The goal of this step is to show that $\alpha'\Uparrow \gamma'$. The proof is by contradiction. Suppose $\alpha'$ is not a minimal forcing function for $\gamma'$. Thus, there exists a minimal forcing function $\alpha''$ for $\gamma'$, such that 
$\alpha'' \Uparrow \gamma'$ and $\alpha'' \prec \alpha'$. Define $\theta$ as follows:
\[
 \theta(e) = 
  \begin{cases} 
   \gamma(e), &  \alpha''(e) = \gamma'(e), \\
    0,      &  \text{otherwise.}
  \end{cases}
\]
By the assumption $\operatorname{Supp}(\gamma)=\operatorname{Supp}\left(\gamma^{\prime}\right)$ and $\alpha^{\prime \prime} \uparrow \gamma^{\prime}$, from Step 1 we have $\theta \uparrow \gamma$.

Now, we show that $\theta \preceq \alpha$. The reason is that if $e$ is an arbitrary edge with $\theta(e)=0$, then clearly $\theta(e)\leq \alpha (e)$. If $\theta(e)\neq 0$, then according to the definition of $\theta$, we must have $\theta(e)=\gamma(e)$ and $\alpha''(e)=\gamma'(e)$. 

Note that $\alpha''\prec \alpha' \preceq \gamma'$. Thus, $\alpha'(e)$ is sandwiched between $\alpha''(e)$ and $\gamma'(e)$. Hence, if $\theta(e)\neq 0$, then $\theta(e)=\gamma(e)$ and $\alpha''(e)=\alpha'(e)=\gamma'(e)$. Now, recall the definition of $\alpha'$. There are two possibilities for $\alpha'(e)=\gamma'(e)$. The first possibility is when $\alpha(e)=\gamma(e)$, in which case we have $\alpha(e)=\gamma(e)\geq \theta(e)$. Another case is when both $\alpha'(e)$ and $\gamma'(e)$ are equal to $0$. In this case, $e\notin\supp(\gamma')=\supp(\gamma)$. Therefore, $\gamma(e)$ must be equal to $0$. Since both $\alpha(e)$ and $\theta(e)$ are less than or equal to $\gamma(e)$, hence $\alpha(e)=\theta(e)=0$. This means that in any case, $\theta(e)\leq \alpha(e)$. Thus, $\theta \preceq \alpha$.

We claim that $\theta\neq \alpha$. For a contradiction, let us assume that $\theta= \alpha$. According to the definition of $\theta$, and since currently we assumed that $\theta= \alpha$, we must have $\alpha (e) = \gamma (e)$ whenever $\alpha''(e)=\gamma'(e)$. Also, recall that $\alpha'' \prec \alpha$. Therefore, there exists an edge $e_0$ with the strict inequality $0\leq \alpha''(e_0) < \alpha'(e_0) $. This implies that $\alpha'(e_0)>0$ which means $e_0\in \supp (\alpha') \subseteq \supp (\gamma') = \supp (\gamma)$. The inclusion $\supp (\alpha') \subseteq \supp (\gamma')$ is due to the result of Step 1 in which we showed $\alpha' \uparrow \gamma'$. Therefore, $\gamma(e_0) \neq 0$. 

According to the definition of $\alpha'$ and using the fact that $\alpha'(e_0)\neq 0$ we must have $\alpha'(e_0) = \gamma'(e_0)$ and $\alpha(e_0)=\gamma(e_0)$. Furthermore, since we assumed $\theta=\alpha$, we have $\theta (e_0)=\alpha(e_0)>0$. Since $\theta (e_0)>0$, from the definition of $\theta$, it follows that $\alpha''(e_0)=\gamma'(e_0)$. Now, if we consider the edge $e_0$, using what we have shown so far we get the following contradiction.
\[
\alpha''(e_0)<\alpha'(e_0)=\gamma'(e_0) =\alpha''(e_0).
\]

This proves the claim that $\theta\neq \alpha$. By this claim, $\theta \prec \alpha$. Now, recall that we have shown $\theta \uparrow \gamma, \theta \prec \alpha$, this is a contradiction with the assumption of the theorem $\alpha \Uparrow \gamma$. Thus, $\alpha^{\prime}$ is a minimal forcing function for $\gamma^{\prime}$, i.e. $\alpha^{\prime} \Uparrow \gamma^{\prime}$.
\end{myproof}
\medskip

Theorem \ref{th11} combined with Lemma \ref{satlem} imply that the minimal forcing functions for every $\gamma \in \mathcal{P}_f(G)$ can be obtained in a two-stage process. In the first stage, we only need to know the support of $\gamma$. Having access only to $\supp(\gamma)$, the support of every minimal forcing function for $\gamma$ is specified in the sense that the set 
$\mathcal{D}_\gamma:= \{\supp(\alpha) : \alpha \Uparrow \gamma\}$, only depends on $\supp(\gamma)$ i.e. if 
$\supp(\gamma) = \supp(\gamma')$ then $\mathcal{D}_\gamma = \mathcal{D}_{\gamma'}$.

In the second stage, once we have access to $\supp(\alpha)$ and the values of $\gamma(e)$, we know by Lemma \ref{satlem}, that $\alpha$ is uniquely identified. 
This observation raises the following question.

\begin{question}
If $G$ is a graph, $\gamma \in \mathcal{P}_f(G)$ and $S\subseteq E(G)$, is there a fractional matching $\alpha$ such that 
$\alpha \uparrow \gamma$ and $\supp(\alpha) = S$? 
\end{question}

The next theorem answers this question for bipartite graphs.

\begin{theorem}\label{th15}
Let $G$ be a bipartite garph, $\gamma \in \mathcal{P}_f(G)$, $S \subseteq E(G)$ and 
$T=E(G)\backslash \supp(\gamma)$. 
Then, there exists a fractional matching $\alpha$ such that $\alpha \uparrow \gamma$ and $\supp(\alpha) = S$ if and only if the following conditions are satisfied.
\begin{enumerate}
\item
$S \subseteq \supp(\gamma)$
\item
For every cycle $C$ of $G$, with a proper $2$-coloring of the edges of $C$, each color class intersects $T \cup S$.
\end{enumerate}
\end{theorem}

\begin{prooff}

First, we prove the necessity of the conditions. The first condition is necessary since $\alpha \uparrow \gamma$ implies that 
$\alpha \preceq \gamma$ and thus $\supp(\alpha) \subseteq \supp(\gamma)$.

For the second condition, we argue as follows. Suppose that there exists a fractional matching $\alpha$ such that $\alpha \uparrow \gamma$ and $\supp(\alpha) = S$. Let $C$ be a cycle of $G$. Consider a proper $2$-coloring of the edges of $C$ with color classes $C_1, C_2 \subseteq E(C)$. We claim that for both $i \in \{1,2\}$, $(S\cup T)\cap C_i \neq \emptyset$. For a contradiction, suppose that $(S\cup T)\cap C_1 = \emptyset$. Define $\gamma' \in  \mathcal{P}_f(G)$ as follows: 
\[
 \gamma'(e) = 
  \begin{cases} 
   \gamma(e) + \varepsilon, &  e \in C_2, \\
   \gamma(e) - \varepsilon,  &  e \in C_1, \\
   \gamma(e), & \text{otherwise}, 
  \end{cases}
\]
where $\varepsilon$ is a sufficiently small positive number such that $\gamma(e) - \varepsilon$ is non-negative for all 
$e \in C_1$.
Since $C_1 \cap (S\cup T) = \emptyset$, such an $\varepsilon$ exists. For example, we may take 
$\varepsilon = \frac{1}{2}\min\limits_{e\in C_1}\gamma(e)$.
One can check that $\gamma'$ is also a fractional perfect matching with the same support as $\gamma$ and also
$\alpha \preceq \gamma'$. This contradicts the assumption $\alpha\uparrow \gamma$.

Next, we show that if $S$ satisfies the conditions $1$ and $2$, then there exists $\alpha$ such that $\alpha\uparrow \gamma$ and $\supp(\alpha) = S$. Define $\alpha$ as follows:
\[
 \alpha(e) = 
  \begin{cases} 
   \gamma(e), &  e \in S, \\
    0,       &  e \notin S.
  \end{cases}
\]
Clearly $\alpha \preceq \gamma$. To complete the proof we must show that $\gamma$ is the unique element of $\mathcal{P}_f(G)$ such that $\alpha \preceq \gamma$. For a contradiction, assume that there exists a fractional perfect matching 
$\gamma' \in \mathcal{P}_f(G)$ such that $\gamma \neq \gamma'$ and $\alpha \preceq \gamma'$.

Let $\gamma_\mathrm{dif}:= \gamma -\gamma'$. Since both $\gamma$ and $\gamma'\in \mathcal{P}_f(G)$ we have 
\begin{align}\label{star}
\forall v \in V, \sum\limits_{e: e\ni v}\gamma_\mathrm{dif}(e) = 0.
\end{align}
 This equation guarantees that there exists a cycle of $G$ such that the value of $\gamma_\mathrm{dif}$ on the edges of $C$ are alternatively positive and negative. The reason is that since $\gamma \neq \gamma'$, there exists an edge $e$ for which 
 $\gamma(e) \neq \gamma'(e)$ and thus $\gamma_\mathrm{dif}(e) \neq 0$. Without loss of generality assume that 
 $\gamma_\mathrm{dif}(e)>0$. Let $v$ be an endpoint of $e$. By equation (\ref{star}), there exists another incident edge $e'$ to $v$ with $\gamma_\mathrm{dif}(e')<0$. We can repeat this argument until we reach a vertex for the second time. Thus, we obtain a cycle with alternating sign of $\gamma_\mathrm{dif}$ on its edges except possibly the first and the last edges. But since $G$ is 
 bipartite, $C$ is even and therefore, the first and the last traversed edges must have also different signs.
 
 Let $C_1$ and  $C_2$ be the sets of the edges with positive and negative values of $\gamma_\mathrm{dif}$, respectively. According to the second condition of the theorem, $C_1 \cap (S\cup T) \neq \emptyset$. Let $e\in C_1 \cap (S\cup T)$. Therefore, 
 $\gamma(e) - \gamma'(e) = \gamma_\mathrm{dif}(e) > 0$. This implies that $\gamma(e) > \gamma'(e) \geq 0$ and consequently 
  $\gamma(e)>0$. That is, $e \in \supp(\gamma)$ or equivalently $e\notin T$. Thus, $e\in S$. Now, according to the definition of $\alpha$, $\alpha(e) = \gamma(e)$. 
  Hence, $\alpha(e)>\gamma'(e)$, a contradiction.
\end{prooff}

\begin{remark}\label{remark}
\normalfont
In the previous theorem, we can always take $\alpha$ to be the following function.
\[
 \alpha(e) = 
  \begin{cases} 
   \gamma(e), &  e \in S, \\
    0,       &  e \notin S.
  \end{cases}
\]
This is a direct consequence of Observation \ref{obs10} and the fact that $\alpha$ is the largest function with support $S$ and $\alpha \preceq \gamma$.

\end{remark}

\begin{lemm}\label{lem13}
Let $G$ be a graph and $\gamma, \gamma_1, \dots, \gamma_n \in \mathcal{P}_f(G)$ such that $\gamma$ is a convex combination of $\gamma_i$'s with positive coefficients $\lambda_i$. Let $\alpha$ be a fractional matching such that $\alpha \Uparrow \gamma$. 
Then, there exist fractional matchings $\alpha_1$, $\alpha_2$, $\dots$, $\alpha_n$ such that $\alpha = \sum\lambda_i \alpha_i$ and for every $i$, we have $\alpha_i\uparrow \gamma_i$.
\end{lemm}

\begin{prooff}

For every $i$, define $\alpha_i $ as follows:
\[
 \alpha_i(e) = 
  \begin{cases} 
   \gamma_i(e), &  \alpha(e) \neq 0, \\
    0,       &  \alpha(e) = 0.
  \end{cases}
\]
Then, for every edge $e$, $\alpha_i(e) \leq \gamma_i(e)$. Thus, $\alpha_i$ is extendable to $\gamma_i$. Now, we show that for every edge $e$,
$$\alpha(e) = \sum\limits_{i=1}^{n}\lambda_i \alpha_i(e).$$
If $\alpha(e) = 0$, then $\alpha_i(e) = 0$. In this case, $\sum\lambda_i \alpha_i(e) = 0$. If $\alpha(e) \neq 0$, then $\alpha_i(e) = \gamma_i(e)$. Thus,
$\gamma(e) = \sum\lambda_i \gamma_i (e)= \sum\lambda_i \alpha_i(e)$. Since $\alpha$ is minimal, by Lemma \ref{satlem}, $e$ must be saturated. Thus, $\alpha(e) = \gamma(e)$ and therefore $\alpha(e) = \sum\lambda_i \alpha_i(e)$.

Finally, we need to show that $\alpha_i\uparrow \gamma_i$. For a contradiction, suppose that there exists an index $t$ and a fractional perfect matching $\gamma'_t$, distinct from $\gamma_t$, such that $\alpha_t \preceq \gamma'_t$. Thus, we can conclude $\alpha = \sum\lambda_i \alpha_i \preceq \lambda_t \gamma'_t+ \sum\limits_{i:i\neq t} \lambda_i \gamma_i \neq \gamma$, a contradiction. Note that $\lambda_t \gamma'_t+ \sum\limits_{i:i\neq t} \lambda_i \gamma_i$ is a convex combination of fractional perfect matchings, hence a fractional perfect matching. 
\end{prooff}
\medskip

In the case of a bipartite graph, $\gamma_i$'s in Lemma \ref{lem13} are integral and for each $i$, $\alpha_i$ corresponds to a forcing set for $\gamma_i$. Note that, the following example shows that these $\alpha_i$'s are not necessarily minimal. 

\begin{example}\label{ex14}
Consider the graph $P_2 \Box P_4$. A fractional perfect matching $\gamma$ for $G$ is specified in the following picture.
\begin{center}
\includegraphics[width=50mm,scale=0.5]{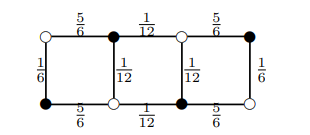}
\end{center}

It is easy to see that $\gamma$ can be written as $\gamma = \frac{1}{12}\gamma_1 + \frac{5}{6}\gamma_2 + \frac{1}{12}\gamma_3$ where for every $i$, $\gamma_i \in \mathcal{P}(G)$.\newline\newline

\begin{center}
\includegraphics[width=110mm,scale=0.5]{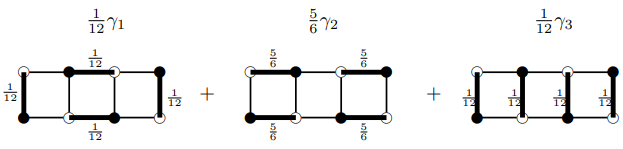}
\end{center}

The fractional matching $\alpha$, specified in the following picture is extendable to $\gamma$.

\begin{center}
\includegraphics[width=34mm,scale=0.5]{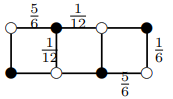}
\end{center}

The fractional matching $\alpha$ can be written as $\alpha = \frac{1}{12}\alpha_1 + \frac{5}{6}\alpha_2 + \frac{1}{12}\alpha_3$. These $\alpha_i$'s, are specified in the following picture (by assigning the value $1$ to the bold edges and $0$ to the remaining edges).

\begin{center}
\includegraphics[width=110mm,scale=0.5]{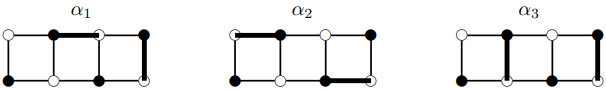}
\end{center}

The fractional matching $\alpha_1$ is a forcing function for $\gamma_1$ but it's not minimal.
\end{example}

\begin{definition}\label{def:ff}
\normalfont
Let $G$ be a graph and $\gamma$ be any fractional perfect matching of $G$. We define the quantities \textit{fractional forcing number} of $\gamma$, the \textit{fractional forcing number} of $G$, the
\textit{maximum fractional forcing number} of $G$, and the \textit{spectrum} of the fractional forcing numbers of $G$,
respectively as follows:

\begin{align*}
&f_G(\gamma):= \inf\limits_{\alpha: \alpha\uparrow\gamma}\|\alpha\|_1.\\
&f_f(G):= \inf\limits_{\gamma \in \mathcal{P}_f(G)} f_G(\gamma).\\
&F_f(G):= \sup\limits_{\gamma \in \mathcal{P}_f(G)} f_G(\gamma).\\
&\spec_f(G):= \{f_G(\gamma): \gamma \in \mathcal{P}_f(G)\}.
\end{align*}
\end{definition}

In the next lemma, we show that in the definition of $f_G(\gamma)$, we can replace the infimum with minimum.

\begin{lemm}\label{lem:ffisattained}
Let $G$ be a graph, and $\gamma$ be a fractional perfect matching. Then, there exists a fractional forcing function $\alpha$ such that $\alpha\uparrow \gamma$ and $f_G(\gamma)=\|\alpha\|_1$.
\end{lemm}

\begin{prooff}

By Corollary \ref{cor:minimalexists} we know that there exists at least one fractional forcing function $\alpha_1$ for $\gamma$ such that $\alpha_1\Uparrow \gamma$. Also, by Corollary \ref{cor:finitelymanyminimal}, we know that there are at most finitely many minimal fractional forcing functions. Let $\alpha_1,\alpha_2,\ldots, \alpha_k$ be the set of all minimal fractional forcing functions of $\gamma$. Let $\alpha$ be an element of $\{\alpha_1,\alpha_2,\ldots,\alpha_k\}$ with the minimum $\ell_1$-norm. We claim that $f_G(\gamma)=\|\alpha\|_1$. If this is not the case, then there exists a partial matching $\alpha'$ such that $\alpha'\uparrow \gamma$ and $\|\alpha'\|_1<\|\alpha\|_1$. By Lemma \ref{lem:minimalff}, We know that there exists one of the minimal fractional forcing functions of $\gamma$, say $\alpha_i$, such that $\alpha_i\preceq \alpha'$ and $\alpha_i\Uparrow \gamma$. Hence, $\|\alpha'\|_1 < \|\alpha\|_1  \leq \|\alpha_i\|_1 \leq  \|\alpha'\|_1$, which is a contradiction.
\end{prooff}

\medskip

In what follows, we show that $\spec_f(G)$ contains every real number in the interval $[f_f(G), F_f(G)]$. We also show that the in the definitions of $f_f(G)$ and $F_f(G)$, we can replace the infimum and supremum in Definition \ref{def:ff} with minimum and maximum, respectively. 

These claims are immediate corollaries of the continuity of the function $f_G$, which we precisely define and prove next. To start, define
\[d: \mathcal{P}_f(G) \times \mathcal{P}_f(G) \longrightarrow \mathbb{R}^{\geq 0},\]
where 
$d(\gamma_1, \gamma_2) := \|\gamma_1 - \gamma_2\|_1$ is the $\ell_1$ norm on $\mathcal{P}_f(G)$. 
For any graph $G$, define the function $f_G : \mathcal{P}_f(G) \longrightarrow \mathbb{R}^{\geq 0}$ where $f_G(\gamma) := f_G( \gamma)$. We have the following theorem.

\begin{theorem}\label{cont}
The function $f_G : \mathcal{P}_f(G) \longrightarrow \mathbb{R}^{\geq 0}$ with respect to the metric $d$ is continuous.
\end{theorem}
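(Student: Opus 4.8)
The plan is to establish continuity of $f_f$ by exhibiting, for each $\gamma_0\in\mathcal{P}_f(G)$ and each nearby $\gamma$, a pair of comparison inequalities that pin $f_f(\gamma)$ to within $O(d(\gamma,\gamma_0))$ of $f_f(\gamma_0)$. The natural tool is Theorem \ref{th11}: it lets us transport a minimal forcing function between $g$-factors that share the same support. The delicate issue is that $\supp(\gamma)$ can jump as $\gamma$ moves, so I would split into the "easy" directions (where support is preserved or shrinks) and handle the support-creation case by a limiting/convexity trick.

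First I would handle perturbations that do not enlarge the support. Suppose $\gamma$ is close to $\gamma_0$ and $\supp(\gamma)\subseteq\supp(\gamma_0)$. I would actually want equal supports; to arrange this, given $\gamma$ close to $\gamma_0$, consider $\gamma':=\tfrac12\gamma+\tfrac12\gamma_0$, which by Lemma \ref{conv lemma} is a fractional perfect matching with $\supp(\gamma')=\supp(\gamma_0)$ and $d(\gamma',\gamma_0)=\tfrac12 d(\gamma,\gamma_0)$. Take $\alpha_0\Uparrow\gamma_0$ achieving $f_f(G,\gamma_0)=|\alpha_0|$. By Theorem \ref{th11}, the function $\alpha'$ defined by $\alpha'(e)=\gamma'(e)$ on saturated edges of $\alpha_0$ and $0$ elsewhere satisfies $\alpha'\Uparrow\gamma'$; since $\alpha'$ and $\alpha_0$ agree (as index sets) on the saturated edges and differ only by replacing $\gamma_0(e)$ with $\gamma'(e)$ there, $\big||\alpha'|-|\alpha_0|\big|\le\sum_{e}|\gamma'(e)-\gamma_0(e)|=d(\gamma',\gamma_0)$. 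Hence $f_f(G,\gamma')\le|\alpha'|\le f_f(G,\gamma_0)+d(\gamma',\gamma_0)$, and by the symmetric argument starting from a minimal forcing function of $\gamma'$ we get $|f_f(G,\gamma')-f_f(G,\gamma_0)|\le d(\gamma',\gamma_0)$. This proves the map $\gamma\mapsto f_f(G,\gamma)$ is $1$-Lipschitz when restricted to any single face of $\mathcal{P}_f(G)$ (the set of fractional perfect matchings with a fixed support).

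The remaining obstacle — and I expect it to be the crux — is comparing $f_f$ at $\gamma_0$ with $f_f$ at a $\gamma$ whose support is strictly larger than $\supp(\gamma_0)$ (which can happen for $\gamma$ arbitrarily close to $\gamma_0$, since $\gamma_0$ may lie on a lower-dimensional face). To bound $f_f(G,\gamma)$ from above near such a $\gamma_0$: write $\gamma$ as a convex combination $\gamma=\sum\lambda_i\gamma_i$ of vertices of $\mathcal{P}_f(G)$; apply Lemma \ref{lem13} to a minimal forcing function of $\gamma$ to pull back, but more usefully, go the other way: take $\alpha_0\Uparrow\gamma_0$ and build a forcing function for $\gamma$ by keeping $\alpha_0$'s saturated edges at value $\gamma(e)$ and additionally setting $\alpha$ equal to $\gamma$ on all the "new" edges $\supp(\gamma)\setminus\supp(\gamma_0)$; this is extendable to $\gamma$, is forcing (any competitor $\gamma'\succeq\alpha$ has $\gamma'=\gamma$ on the new edges and on $\alpha_0$'s saturated edges, which forces the rest by the same argument that made $\alpha_0$ forcing — one needs to check this carefully, possibly via Theorem \ref{th15}), and its weight exceeds $|\alpha_0|$ by at most $\sum_{e\notin\supp(\gamma_0)}\gamma(e)+\sum_{e\text{ sat.}}|\gamma(e)-\gamma_0(e)|\le d(\gamma,\gamma_0)$. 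So $f_f(G,\gamma)\le f_f(G,\gamma_0)+d(\gamma,\gamma_0)$ still holds. For the lower bound $f_f(G,\gamma_0)\le f_f(G,\gamma)+d(\gamma,\gamma_0)$, one cannot simply restrict a forcing function of $\gamma$ (its support may hit edges outside $\supp(\gamma_0)$), so I would instead argue by lower semicontinuity: if $\gamma_k\to\gamma_0$ with $\liminf f_f(G,\gamma_k)<f_f(G,\gamma_0)$, pick minimal forcing functions $\alpha_k\Uparrow\gamma_k$ with bounded weight, pass to a subsequence with $\alpha_k\to\alpha_\infty$ (the $\alpha_k$ live in a compact polytope by Proposition \ref{cor9}); then $\alpha_\infty\preceq\gamma_0$, and one shows $\alpha_\infty$ is still forcing for $\gamma_0$ — because any $\gamma'\succeq\alpha_\infty$ with $\gamma'\ne\gamma_0$ would, via the cycle/convexity mechanism of Theorem \ref{th15} and a perturbation argument, yield for large $k$ a $g$-factor $\ne\gamma_k$ dominating $\alpha_k$, contradicting $\alpha_k\Uparrow\gamma_k$. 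This gives $f_f(G,\gamma_0)\le|\alpha_\infty|\le\liminf|\alpha_k|$, the desired semicontinuity, and combined with the upper bound yields continuity.

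I would present the proof in exactly this order: (1) the equal-support Lipschitz estimate via Theorem \ref{th11}; (2) the global upper bound $f_f(G,\gamma)\le f_f(G,\gamma_0)+d(\gamma,\gamma_0)$ by extending a minimal forcing function of $\gamma_0$ across the new support edges; (3) lower semicontinuity via compactness of the forcing-function polytope plus a limiting argument that a limit of forcing functions stays forcing. Steps (1) and (2) are essentially bookkeeping on top of the already-proved structural theorems; the genuine work is step (3), verifying that the forcing property survives the limit, and I would lean on Theorem \ref{th15}'s characterization (cycles, proper $2$-colorings, $S\cup T$) together with an $\varepsilon$-perturbation along an alternating cycle — the same device used in the proofs of Lemma \ref{satlem} and Theorem \ref{th11} — to derive the needed contradiction.
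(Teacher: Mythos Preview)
Your Step 2 construction is exactly the paper's key Lemma \ref{lem23}: given $\alpha_1\Uparrow\gamma_1$, define $\alpha_2$ to equal $\gamma_2$ on $\supp(\alpha_1)\cup(\supp(\gamma_2)\setminus\supp(\gamma_1))$ and $0$ elsewhere; then $\alpha_2\uparrow\gamma_2$ and $\big||\alpha_2|-|\alpha_1|\big|\le d(\gamma_1,\gamma_2)$. The point you miss is that this construction is completely \emph{symmetric} in $\gamma_1,\gamma_2$: there is no hypothesis that $\supp(\gamma_1)\subseteq\supp(\gamma_2)$. So applying it once from a minimum $\alpha_0\Uparrow\gamma_0$ gives $f_f(\gamma)\le f_f(\gamma_0)+d(\gamma,\gamma_0)$, and applying it from a minimum $\alpha\Uparrow\gamma$ gives $f_f(\gamma_0)\le f_f(\gamma)+d(\gamma,\gamma_0)$. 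That is already $1$-Lipschitz continuity; your Steps 1 and 3 are unnecessary.

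Two smaller issues. First, your verification in Step 2 that the extended $\alpha$ is forcing is not right as written: from $\gamma'\succeq\alpha$ you only get $\gamma'(e)\ge\gamma(e)$ on the saturated and ``new'' edges, not equality, so ``forces the rest by the same argument that made $\alpha_0$ forcing'' does not go through directly. The paper instead uses the perturbation device you invoke only in Step 3: if $\gamma'\ne\gamma$ dominates $\alpha_2$, then $\gamma_0+t(\gamma'-\gamma)$ is, for small $t>0$, a fractional perfect matching dominating $\alpha_0$, contradicting $\alpha_0\Uparrow\gamma_0$. This works for arbitrary supports and replaces your appeal to Theorem \ref{th15}, which is stated only for bipartite graphs and so cannot carry the general case. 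Second, your Step 3 compactness argument can be made to work (after passing to a subsequence so that the saturated-edge pattern of the $\alpha_k$ stabilizes), but it is a detour once you realize Step 2 already gives both inequalities.
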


To prove the theorem, we
 use the following lemma

\begin{lemm}\label{lem23}
Let $\gamma_1, \gamma_2 \in \mathcal{P}_f(G)$, $\alpha_1\Uparrow \gamma_1$ and $d(\gamma_1, \gamma_2) < \varepsilon$. Then, there exists a fractional matching $\alpha_2$ such that $\alpha_2 \uparrow \gamma_2$ and 
$\|\alpha_1-\alpha_2\|_1 < \varepsilon$.
\end{lemm}
\begin{prooff}

We prove the lemma by constructing $\alpha_2$ which satisfies all of the conditions. Define fractional matching $\alpha_2$ as follows:
\[
 \alpha_2(e) = 
  \begin{cases} 
    \gamma_2(e), &  e \in \supp(\alpha_1) \text{ or } e \in \supp(\gamma_2)\backslash \supp(\gamma_1),\\
     0,                      & \text{otherwise.}
  \end{cases}
\]
Notice that the case ``otherwise" in the above definition is equivalent to the case where $e \in \supp(\gamma_1)\backslash\supp(\alpha_1) \text{ or } e\notin \supp(\gamma_1)\cup \supp(\gamma_2).$

We complete the proof in two steps. In the first step, we show that $\alpha_2$ is a fractional matching such that $\alpha_2 \uparrow \gamma_2$. In the second step, we prove that $|\sum\limits_{e\in E}\alpha_1(e) - \sum\limits_{e\in E}\alpha_2(e)| \leq  \varepsilon. $

\subsubsection*{Step 1.}
By the definition of $\alpha_2$, it is clear that $\alpha_2 \preceq \gamma_2$. Hence, $\alpha_2$ is a fractional matching. Now, we show $\alpha_2 \uparrow \gamma_2$. For a contradiction, let us assume that $\alpha_2$ is not a forcing function for $\gamma_2$. Thus, a fractional perfect matching $\gamma'\neq \gamma_2$ exists such that $\alpha_2 \preceq \gamma'$. We show that there exists a sufficiently small $t>0$ such that the following holds.
\begin{enumerate}
\item $ \gamma_1 + t(\gamma' - \gamma_2)$ is a fractional perfect matching distinct from $\gamma_1$, and
\item $\alpha_1 \preceq \gamma_1 + t(\gamma' - \gamma_2)$.
\end{enumerate}
 Note that the validity of the above statements contradicts $\alpha_1 \Uparrow \gamma_1$.
 
 First, we show that $\gamma_1 + t (\gamma'- \gamma_2)$ is a fractional perfect matching. For any vertex $v$ we have
 \begin{align*}
\sum\limits_{e: e\ni v} \Big(\gamma_1(e) + t \big(\gamma'(e) - \gamma_2(e)\big)\Big) &= \sum\limits_{e: e\ni v} \gamma_1(e) + t\big(\sum\limits_{e: e\ni v}\gamma'(e) - \sum\limits_{e: e\ni v} \gamma_2(e)\big)=1.
\end{align*}
For the non-negativity condition, note that the only case where $\gamma_1(e)+t\big(\gamma'(e)-\gamma_2(e)\big)<0$ for all positive $t$ is when $\gamma_1(e)=0$ and $\gamma'(e)<\gamma_2(e)$. If such an edge $e$ exists, it must satisfy $e\in \supp(\gamma_2) \backslash \supp(\gamma_1)$. Hence, according to the definition of $\alpha_2$ we have $\alpha_2(e)=\gamma_2(e)>0$. On the other hand, since $\alpha_2(e)\leq \gamma'(e)$ we get $\gamma_2(e)=\alpha_2(e)\leq \gamma'(e)$ which contradicts the assumption that $\gamma'(e)<\gamma_2(e)$. This means that for any edge $e$, if $t$ is a small enough positive number, $\gamma_1(e)+t\big(\gamma'(e)-\gamma_2(e)\big)\geq 0$. If we take $t$ small enough positive number, then the inequalities hold for all the edges simultaneously.  Therefore, for some small positive $t$, $\gamma_1+t(\gamma'-\gamma_2)$ is a fractional perfect matching. We proceed to prove the second part; namely $\alpha_1 \uparrow \gamma_1 + t(\gamma'- \gamma_2)$. Also, because $t>0$ and $\gamma'\neq \gamma_2$, we have $\gamma_1 \neq \gamma_1 + t(\gamma'- \gamma_2)$. 

Next, we prove $\alpha_1 \preceq \gamma_1 + t(\gamma'- \gamma_2)$ by considering the following two cases.
\begin{itemize}





\item \textbf{1.} $e \in \supp(\alpha_1)$

In this case, according to the definition of $\alpha_2$ we have $\alpha_2(e)=\gamma_2(e)$. Since $\alpha_2\preceq \gamma'$ we conclude that $\alpha_2(e)\leq \gamma'(e)$. Hence, $\gamma_2(e)\leq \gamma'(e)$.

Thus, for every $t > 0$ we have $\gamma_1(e) +  t(\gamma'(e) - \gamma_2(e)) \geq \gamma_1(e)$. This concludes that $\alpha_1(e) \leq \gamma_1(e)\leq \gamma_1(e) + t(\gamma'(e) - \gamma_2(e))$.

\item \textbf{2.} $e \notin \supp(\alpha_1)$

Since $\alpha_{1}(e)=0$, and $\gamma_{1}+t\left(\gamma^{\prime}-\gamma_{2}\right)$ is a fractional perfect matching, $\gamma_{1}(e)+t\left(\gamma^{\prime}(e)-\gamma_{2}(e)\right) \geq 0$ for any edge $e$. Thus, $\gamma_{1}(e)+t\left(\gamma^{\prime}(e)-\gamma_{2}(e)\right) \geq 0=\alpha_{1}(e)$.


\end{itemize}

\subsubsection*{Step 2.}
We show that $|\sum\limits_{e \in E}\alpha_1(e) - \sum\limits_{e\in E} \alpha_2(e)| < \varepsilon$. Let
 $S_1 = \supp(\alpha_1)$, $S_2 =  \supp(\gamma_2)\backslash \supp(\gamma_1)$, $S_3 = \supp(\gamma_1)\backslash \supp(\alpha_1)$, and $S_4=E(G)-\big(\supp(\gamma_1)\cup \supp(\gamma_2)\big)$.
Then, we can split the summation $\sum\limits_{e\in E} \alpha_1(e) - \sum\limits_{e\in E}\alpha_2(e)$ into four parts, depending on whether $e$ belongs to $S_1,S_2,S_3$ or $S_4$. Notice that the edges $e\in S_4$ do not contribute simply because for such edges we have $\gamma_1(e)=\gamma_2(e)=0$, hence $\alpha_1(e)=\alpha_2(e)=0$. Similarly, the edges in $S_3$ do not contribute to the difference  since for any edge $e$ in $S_3$, by the definition of $\alpha_2$, we have $\alpha_1(e)=\alpha_2(e)=0$.  Now, we have the following chain of equalities and inequalities.

\begin{center}
$$
\begin{aligned}
& \left|\sum_{e \in E} \alpha_{1}(e)-\sum_{e \in E} \alpha_{2}(e)\right| = \left|\sum_{e \in S_{1}}\big( \alpha_{1}(e)- \alpha_{2}(e)\big)+\sum_{e \in S_{2}}\big( \alpha_{1}(e)- \alpha_{2}(e)\big)\right|\\
& \leq\left|\sum_{e \in S_{1}} \alpha_{1}(e)-\sum_{e \in S_{1}} \alpha_{2}(e)\right|+\left|\sum_{e \in S_{2}} \alpha_{1}(e)-\sum_{e \in S_{2}} \alpha_{2}(e)\right| \\
& \leq\left|\sum_{e \in S_{1}} \gamma_{1}(e)-\sum_{e \in S_{1}} \gamma_{2}(e)\right|+\left|\sum_{e \in S_{2}} \gamma_{1}(e)-\sum_{e \in S_{2}} \gamma_{2}(e)\right|\\
& \leq \sum_{e \in E}\mid \gamma_{1}(e)-\gamma_{2}(e) \mid =d(\lambda_1,\lambda_2)< \varepsilon.
\end{aligned}
$$
\end{center}

In the above chain, the first equality is due to the observation that the edges in $S_3, S_4$ do not contribute to the difference. The second inequality holds because of the following facts. If $e\in S_1$, then $\alpha_2(e)=\gamma_2(e)$ and $\alpha_1(e)=\gamma_1(e)$ according to Lemma \ref{satlem}. If $e\in S_2$, then $\alpha_2(e)=\gamma_2(e)$ and $\alpha_1(e)\leq \gamma_1(e)=0$. The other inequalities are obvious. 
\end{prooff}
\medskip

\begin{myproof} {Theorem \textnormal{\ref{cont}}}
It is enough to show that 
$$\forall \varepsilon > 0 ~ \exists \delta > 0 , d(\gamma_1,\gamma_2) < \delta \Rightarrow |f_G(\gamma_1) - f_G(\gamma_2)| < \varepsilon.$$

Let $\delta:=\varepsilon$. Let $\alpha_1 \Uparrow \gamma_1$ and $\alpha_2 \Uparrow \gamma_2$ be two minimum forcing functions for $\gamma_1,\gamma_2$, respectively. Notice that the existence of $\alpha_1$ and $\alpha_2$ is guaranteed by Lemma \ref{lem:ffisattained}. Then,
$$|f_G(\gamma_1) - f_G(\gamma_2)| = \sum\limits_{e\in E}\alpha_1(e) - \sum\limits_{e\in E}\alpha_2(e)|.$$
Suppose that $\sum\limits_{e\in E}\alpha_1(e) \geq \sum\limits_{e\in E}\alpha_2(e)$. Then, Lemma \ref{lem23} implies that there exists a fractional matching $\theta$ such that $\theta \uparrow \gamma_1$ and 
$|\sum\limits_{e\in E} \theta(e) - \sum\limits_{e\in E}\alpha_2(e)| < \varepsilon$. Since $\alpha_1$ is minimum, thus
$\sum\limits_{e\in E} \alpha_1 (e) \leq \sum\limits_{e\in E} \theta(e)$. Therefore,
$$|\sum \limits_{e\in E} \alpha_1(e) - \sum\limits_{e\in E}  \alpha_2(e)| \leq |\sum \limits_{e\in E} \theta(e) - \sum \limits_{e\in E} \alpha_2(e)| < \varepsilon.$$ 
In the case $\sum\limits_{e\in E}\alpha_1(e) \leq \sum\limits_{e\in E}\alpha_2(e)$ the proof is similar by symmetry.
\end{myproof}

\begin{coroll}
For every graph $G$, we have  $f_f(G):= \min\limits_{\gamma \in \mathcal{P}_f(G)} f_G(\gamma) $, and 
$F_f(G):= \max\limits_{\gamma \in \mathcal{P}_f(G)} f_G(\gamma).$
\end{coroll}

\begin{prooff}

Notice that the domain of the function $f_G(\gamma)$ is $\mathcal{P}_f(G)$ which is closed and bounded subset of $\mathbb{R}^{E(G)}$, hence compact. Since $f_G(\gamma)$ is a continuous function on the compact set $\mathcal{P}_f(G)$, thus it attains both infimum and supremum.
\end{prooff}

\begin{coroll}	
$\spec_f(G) = [f_f(G), F_f(G)]$.
\end{coroll}

\begin{prooff}

Since $f_f$ is a continuous map on the convex-hence connected- domain $\mathcal{P}_f(G)$, the range of $f_f$ must also be a connected subset of $\mathbb{R}$.
\end{prooff}

\medskip

\begin{lemm}\label{lem15}
Let $M$ be a perfect matching of $G$. If $\alpha \uparrow M$ then $\supp(\alpha)$ is a forcing set for $M$.
\end{lemm}

\begin{prooff}

If $\supp(\alpha)$ is not a forcing set for $M$, then there exists a perfect matching $M'$ of $G$ distinct from $M$, such that $\supp(\alpha)$ also extends to $M'$. Therefore, we have $\alpha \preceq \mathbbm{1}_{\supp(\alpha)} \preceq \mathbbm{1}_{M'}$. This contradicts the assumption $\alpha \uparrow M$.
\end{prooff}

\begin{lemm}\label{lem16}
For every perfect matching $M$ of $G$, $f_G(M) \geq f(G,M)$. Furthermore, if $G$ is bipartite then $f_G(M) = f(G,M)$.
\end{lemm}

\begin{prooff}

Let $\alpha$ be a minimum forcing function for $M$ in which $M$ is considered as a fractional perfect matching. Since $M$ is a perfect matching, for every $e\in E(G)$ we have $M(e) \in \{0,1\}$.

On the other hand, by Lemma \ref{satlem}, we know that $\alpha(e) \in \{0,M(e)\}$. This implies that $\alpha(e) \in \{0,1\}$. Thus, $\|\alpha\|_1 = |\supp(\alpha)|$. By Lemma \ref{lem15}, $\supp(\alpha)$ is a forcing set for $M$, meaning 
$f(G,M) \leq |\supp(\alpha)| = \|\alpha\|_1 = f_G(M)$. 

Now, for the second part of the lemma, suppose that $G$ is bipartite. Let $S\subseteq M$ be a forcing set for $M$ of minimum size. Thus, $f(G,M)=|S|$. 
We claim that $\mathbbm{1}_{S} \uparrow M$. For a contradiction, assume that $\mathbbm{1}_{S} \preceq \gamma$ for a fractional perfect matching $\gamma$ distinct from $M$.
 Therefore, $\mathbbm{1}_S\preceq \gamma$. This concludes that for every edge $e\in S$ we have $1=\mathbbm{1}_S(e)\leq \gamma(e)\leq 1$. Hence, for every $e\in S$, $\gamma(e)=1$.

Again, since $\mathcal{P}_f(G)=\mathcal{P}(G)$, we may express $\gamma$ as a convex combination of perfect matchings of $G$; that is 
$$
\gamma = \sum\limits_{i} \alpha_iM_i,
$$

where $\alpha_i$'s are positive real numbers of summation $1$ and $M_i$'s are perfect matchings of $G$. For every $e\in S$ we have $$1=\gamma(e) =  \sum\limits_{i} \alpha_iM_i(e)= \sum\limits_{i:M_i(e)=1} \alpha_i \leq 1.$$ This simply shows that for each of $M_i$'s, we must have $M_i(e)=1$. In other words, $S\subseteq M_i$ for all the indices, when we regard $M_i$'s as a set of edges. On the other hand, $M$ is the unique perfect matching containing $S$. Thus, all $M_i$'s are identical to $M$ and therefore, $\gamma=M$ which is a contradiction.
\end{prooff}
\medskip
\begin{remark}
In the non-bipartite graph $G$ of Example \ref{ex19}, there exists a unique perfect matching $M$ consisting of the bold edges. Thus, $f(G,M)=0$. However, since this graph has more fractional perfect matchings, $f_G(M)$ is strictly positive. Thus, the inequality in Lemma \ref{lem16} can be strict if $G$ is not bipartite.
\end{remark}
Now, using Lemma \ref{lem16}, in the case of bipartite graphs, we can show that the forcing number of a graph is equal to its fractional forcing number. 

\begin{theorem}\label{fnth}
Let $G$ be a bipartite graph with a perfect matching. Then, $f_f(G) = f(G)$.
\end{theorem}

\begin{prooff}

By Lemma \ref{lem16}, for every integral perfect matching $M$ we have $f(G,M) = f_G(M)$. 

Since 
$f(G) = \min\limits_{M \in \mathcal{P}(G)}f(G,M)$ where $M$ is a perfect matching, and $f_f(G) = \min\limits_{\gamma \in \mathcal{P}_f(G)}f_G(\gamma)$ and $\mathcal{P}(G) \subseteq \mathcal{P}_f(G)$, we conclude that $f_f(G) \leq f(G)$.

Now, by using Lemma \ref{lem13}, we prove $f(G) \leq f_f(G)$. Suppose that $\alpha$ is a fractional matching for which the 
graph takes its fractional forcing number $f_f(G)$. Thus, $\alpha$ is minimal and uniquely extendable to a fractional perfect 
matching $\gamma$. Since $G$ is a bipartite graph, $\gamma$ can be written as $\gamma = \sum\lambda_i \gamma_i$ where for 
each $i$, $\gamma_i$ is integral and $\sum\lambda_i = 1$. By using Lemma \ref{lem13}, there exist
$\alpha_1, \alpha_2, \dots, \alpha_n$
such that $\alpha = \sum\lambda_i \alpha_i$ and each $\alpha_i$ is uniquely extendable to $\gamma_i$. Thus, if we 
consider $\alpha_i$ as a matching and $\gamma_i$ as a perfect matching, then each $\alpha_i$ is a forcing set for $\gamma_i$. 
Let $f(G) = r$ and $k_i$ be the minimum forcing set for $\gamma_i$. Then,
\begin{align*}
\|\alpha_i\|_1 &\geq \|k_i\|_1,\\
\|\alpha\|_1 &= \|\sum\lambda_i \alpha_i\|_1 = \sum\lambda_i\|\alpha_i\|_1 \geq \sum\lambda_i\|k_i\|_1\geq \sum\lambda_i r=r,\\
\|\alpha\|_1 &\geq r \Rightarrow f_f(G) \geq f(G).
\end{align*}
Notice that in the above, the second equality of the second line is due to the fact that all the coefficients $\lambda_i$'s and the entries of $\alpha_i$'s are nonnegative numbers. 
\end{prooff}
\medskip

In the case of non-bipartite graphs, the fractional forcing number of the graph is not necessarily equal to the forcing number of the graph.

\begin{example}\label{ex18}
In the following graph, $f(G) = 1$ but $0<f_f(G) \leq \frac{1}{2}$. The positivity of $f_f(G)$ is simply because there are more than one fractional perfect matching, hence $f_f(G)\neq 0$. Corollary \ref{th21}, concludes that $f_f(G)$ is exactly equal to $\frac{1}{2}$.

\begin{center}
\includegraphics[width=110mm,scale=0.5]{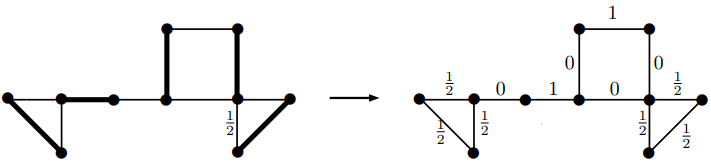}
\end{center}

\end{example}

\begin{example}\label{ex19}
In the following graph $f(G) = 0$ and $0<f_f(G) \leq \frac{1}{2}$. Similar to Example \ref{ex18}, the positivity of $f_f(G)$ is due to the fact that $G$ has more than one fractional perfect matchings. Thus, Corollary \ref{th21}, implies that $f_f(G)$ is exactly equal to $\frac{1}{2}$.

\begin{center}
\includegraphics[width=30mm,scale=0.5]{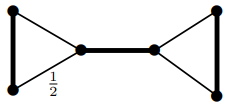}
\end{center}

\end{example}

\begin{coroll}
For every integer $s$, there exists a graph $G$ such that the difference between the forcing number and the fractional forcing number of the graph is $s$ and there exists a graph $G'$ such that this difference is $s+\frac{1}{2}$.
\end{coroll}
\begin{prooff}

The disjoint union of graphs each of which is isomorphic to one of the graphs shown in examples \ref{ex18} and \ref{ex19} gives the results.
\end{prooff}
\medskip
\begin{observation}\label{obs26}
For every graph $G$ with a perfect matching we have
$F_f(G) \geq F(G)$.
\end{observation}

\begin{prooff}

First, notice that for every perfect matching $M$ of $G$ we have $f(G,M) \leq f_G(M)$, by Lemma \ref{lem16}. Since
$F_f(G) = \max\limits_{\gamma \in \mathcal{P}_f(G)}f_G(\gamma)$ and
$F(G) = \max\limits_{M }$ $f(G,M)$ where $M$ goes all perfect matchings,
and 
$\mathcal{P}(G) \subseteq \mathcal{P}_f(G)$, we conclude that $F_f(G) \geq F(G)$.
\end{prooff}
\medskip

We are now ready to prove the main property of $f_f$ in the following theorem.
\begin{theorem}\label{Thmain2}
For every graph $G$, $f_G(.)$ is a concave function on $\mathcal{P}_f(G)$.
\end{theorem}
\begin{prooff}

First, notice that $f_f$ is defined over the convex set $\mathcal{P}_f(G)$. Thus, we must show that for every $\gamma_1, \gamma_2 \in \mathcal{P}_f(G)$ and every $\lambda \in (0,1)$ we have 
\begin{align*}
f_G(\lambda \gamma_1 + (1-\lambda)\gamma_2) \geq \lambda f_G(\gamma_1) + (1-\lambda) f_G(\gamma_2). 
\end{align*}
Let $\gamma := \lambda\gamma_1 + (1-\lambda) \gamma_2$ and $\alpha$ be a minimum forcing function for $\gamma$. Using Lemma \ref{lem13}, there exist fractional matchings $\alpha_i,~ i\in \{1,2\}$ such that $\alpha = \lambda\alpha_1 + (1-\lambda)\alpha_2$ and $\alpha_i \uparrow \gamma_i$. Therefore, $f_G(\gamma_i) \leq \|\alpha_i\|_1$ for $i\in \{1,2\}$. On the other hand, we have 
\begin{align*}
f_G(\gamma) &= \|\alpha\|_1 = \|\lambda \alpha_1 +(1-\lambda)\alpha_2\|_1 = \lambda\|\alpha_1\|_1 + (1-\lambda)\|\alpha_2\|_1\\
		&\geq \lambda f_G(\gamma_1) + (1-\lambda) f_G(\gamma_2).
\end{align*}
Notice that the last equality in the above chain is due to the fact that all the entries of $\alpha_1$ and $\alpha_2$, and the coefficients $\lambda$ and $1-\lambda$ are nonnegative.
\end{prooff}

\begin{coroll}\label{th21}
Let $G$ be a graph. Then, $f_f(G)$ is half-integer.
\end{coroll}

\begin{prooff}

Let $\gamma$ be a minimizer at which $f_f(G)$ takes its minimum. Let $\alpha$ be a minimum forcing function for $\gamma$. Clearly $f_G(\gamma) = \|\alpha\|_1$. Since $f_f$ is a concave function on the polytope $\mathcal{P}_f(G)$, $\gamma$ must be a vertex of $\mathcal{P}_f(G)$. (See \cite{Bauer} for a proof.) Since $\alpha \Uparrow \gamma$ by Lemma \ref{satlem}, $\alpha(e) \in \{0, \gamma(e)\}$. Since $\gamma$ is a vertex of $\mathcal{P}_f(G)$, by  Proposition \ref{edmlem}, $\gamma(e)$ is a half-integer number. Thus, 
$f_G(\gamma)=\|\alpha\|_1$
is also a summation of half-integers, hence a half-integer number.
\end{prooff}

\section{Application}

The problem of finding $F(Q_n)$ is an open problem. In \cite{RN32}, a lower bound for this quantity is found. However, to the best of our knowledge, no trivial upper bound for it is known. 
In this section, we provide a technique to find a non-trivial upper bound for $F(Q_n)$ for arbitrary $n$. This result is presented in Lemma \ref{lem28} below. 
We must also emphasize that this machinery is not specific to the hypercube graph $Q_n$. In fact, it can be used to find the upper bound for $F(G)$ for every graph $G$ which is regular edge-transitive. 

We start with some notation. Let $G$ be a graph. For every $\sigma \in \aut(G)$, a fractional perfect matching $\gamma$ in $\mathcal{P}_f(G)$, and fractional matching $\alpha$, let $\gamma_\sigma(e) := \gamma(\sigma(e))$,  and $\alpha_{\sigma}(e):= \alpha(\sigma(e))$. It is easy to verify that $\gamma_\sigma$ is a fractional perfect matching and $\alpha_\sigma$ is a fractional matching of $G$. Then, we have the following lemma.

\begin{lemm}\label{lem27}
If $\sigma \in \aut(G)$, then for every $\gamma \in \mathcal{P}_f(G)$, we have $f_G(\gamma) = f_G(\gamma_\sigma)$.
\end{lemm}
\begin{prooff}

First, observe that if $\alpha\preceq\gamma$ is a fractional matching, then for every edge $e$, $\alpha(e)\leq \gamma(e)$. Hence, $\alpha_\sigma(e)\leq \gamma_\sigma(e)$. Consequently, $\alpha_\sigma \preceq \gamma_\sigma$. By replacing $\sigma$ with the group inverse of it $\sigma^{-1}$, we can conclude that 
\[\alpha\preceq\gamma \Longleftrightarrow \alpha_\sigma \preceq \gamma_\sigma.\] 
From this observation, it is immediately evident that $\alpha \uparrow \gamma$ if and only if $\alpha_\sigma \uparrow \gamma_\sigma$. This is simply because if $\alpha_\sigma$ is extendable to two distinct fractional perfect matchings $\eta,\eta'$ then $\alpha$ also extends to two distinct fractional perfect matchings $\eta_{\sigma^{-1}}, \eta_{\sigma^{-1}}'$. Similarly, we can see that $\alpha\Uparrow \gamma$ if and only if $\alpha_\sigma \Uparrow \gamma_\sigma$. Finally, we can argue that $\alpha$ is a minimum fractional forcing function of $\gamma$ if and only if $\alpha_\sigma$ is a minimum fractional forcing function of $\gamma_\sigma$. This proves the assertion of the lemma.
\end{prooff}

\begin{lemm}\label{lem28}
Let $G$ be a graph, $S \subseteq \aut(G)$, and $\gamma$ be a fractional perfect matching in $G$  with maximum fractional forcing number. Let
$$\gamma_0 = \frac{1}{|S|}\sum\limits_{\sigma \in S}\gamma_\sigma.$$ 
Then, 
\begin{enumerate}
\item
$\gamma_0$ is a fractional perfect matching.
\item
$f_G(\gamma) = f_G(\gamma_0)$.
\end{enumerate}
\end{lemm}
\begin{prooff}

\begin{enumerate}
\item
Since $\gamma_{0}$ is a convex combination of some fractional perfect matchings, $\gamma_{0}$ is a fractional perfect matching
\item
The proof of second claim follows from the following chain of relations.

\[f_G(\gamma_0) = f_G( \frac{1}{|S|} \sum\limits_{\sigma \in S}\gamma_{\sigma}) \geq   \frac{1}{|S|} \sum\limits_{\sigma \in S}f_G(\gamma_{\sigma})=f_G(\gamma) \geq f_G(\gamma_0)\]

In the above chain, the first equality follows from the definition of $\gamma_0$, the inequality is by the concavity of the function $f_G$ proved in Theorem \ref{Thmain2}, and the last equality follows from Lemma \ref{lem27}. Finally, the last inequality is due to the fact that $\gamma$ has the maximum fractional forcing number.
 \end{enumerate}
\end{prooff}

\begin{coroll}\label{cor33}
Let $G$ be a regular edge-transitive graph, and $v\in V(G)$. Then, the fractional perfect matching that assigns the value $\frac{1}{\deg(v)}$ to all edges, has the maximum fractional forcing number.
\end{coroll}
\begin{prooff}

Since $f_G$ is a continuous function on the compact set $\mathcal{P}_f(G)$, therefore it achieves its maximum. Let
$\gamma \in \mathcal{P}_f(G)$ be a point in which $f_G$ is maximized. Define 
$\gamma_0 := \frac{1}{|\aut(G)|}\sum\limits_{\sigma \in \aut(G)}\gamma_\sigma$. By Lemma \ref{lem28}, 
$f_G(\gamma_0) = f_G( \gamma)$. Thus, $\gamma_0$ is also a maximizer for $f_G$. We claim that 
$\gamma_0(e) = \gamma_0(e')$ for every $e, e'\in E(G)$. This implies that $\gamma_0$ is the constant function that takes the value $\frac{1}{\deg(v)}$ on its domain.

To justify claim, we proceed as follows:
Since $G$ is an edge-transitive graph, we can express any edge $e'$ as $\sigma_0(e)$ for some automorphism $\sigma_0\in \aut(G)$. Since $\aut(G)$ is a group, when $\sigma$ ranges over all possible values of the automorphism group of $G$, so does $\sigma\sigma_0$. Thus, we have
\begin{align*}
\sum\limits_{\sigma \in \aut(G)}\gamma_\sigma(e)&=\sum\limits_{\sigma \in \aut(G)}\gamma(\sigma(e))= \sum\limits_{\sigma \in \aut(G)}\gamma(\sigma\sigma_0(e)) = \sum\limits_{\sigma \in \aut(G)}\gamma\big(\sigma(\sigma_0(e))\big)\\ &
=\sum\limits_{\sigma \in \aut(G)}\gamma(\sigma(e'))=\sum\limits_{\sigma \in \aut(G)}\gamma_\sigma(e').
\end{align*}
\end{prooff}



\begin{theorem}\label{th35}
For any integer number $n\geq 4$, we have $F_f(Q_n) \leq \frac{n 2^{n-1}-5 \times 2^{n-3}}{n}$.
\end{theorem}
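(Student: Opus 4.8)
The plan is to combine the averaging results of Section~4 with Theorem~\ref{th15}. Since $Q_n$ is vertex- and edge-transitive and $n$-regular, Corollary~\ref{cor33} gives $F_f(Q_n)=f_f(Q_n,\gamma_0)$, where $\gamma_0$ is the fractional perfect matching with $\gamma_0(e)=\tfrac1n$ on every edge. Because $\supp\gamma_0=E(Q_n)$, the set $T$ of Theorem~\ref{th15} is empty, so by that theorem together with Remark~\ref{remark} the function $\tfrac1n\mathbbm{1}_S$ is a forcing function for $\gamma_0$ as soon as $R:=E(Q_n)\setminus S$ contains no color class of any properly $2$-edge-colored cycle of $Q_n$ --- call such an $R$ \emph{admissible}. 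For any admissible $R$ we then get $F_f(Q_n)=f_f(Q_n,\gamma_0)\le\big|\tfrac1n\mathbbm{1}_S\big|=|S|/n$. Hence it suffices to construct an admissible $R\subseteq E(Q_n)$ with $|R|=5\cdot 2^{n-3}$, which yields $F_f(Q_n)\le (n2^{n-1}-5\cdot 2^{n-3})/n$.

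For the set $R$ I would proceed as follows. Fix the three coordinate directions $1,2,3$ and write $\sigma(v)=\sum_i v_i\bmod 2$. For $i\in\{1,2,3\}$ let $A_i$ be the set of direction-$i$ edges whose $\sigma$-even endpoint has $i$-th coordinate $0$; a direct count gives $|A_i|=2^{n-2}$. Let $A_3'\subseteq A_3$ consist of those edges whose $\sigma$-even endpoint \emph{also} has first coordinate $0$, so $|A_3'|=2^{n-3}$, and set $R=A_1\cup A_2\cup A_3'$. The three pieces are pairwise edge-disjoint, so $|R|=2\cdot 2^{n-2}+2^{n-3}=5\cdot 2^{n-3}$, exactly as needed.

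The heart of the argument --- and the step I expect to be the real work --- is verifying that $R$ is admissible. Suppose not: let $A\subseteq R$ be a color class of a cycle $C=x_1y_1x_2y_2\cdots x_ky_kx_1$, with $x_iy_i\in A$ and the linking edges $y_ix_{i+1}$ in the other class. Since $Q_n$ is bipartite, $\sigma$ alternates along $C$, so we may assume $\sigma(x_i)=0$ for all $i$; then membership of $x_iy_i$ in $A_1$, $A_2$ or $A_3'$ forces $(x_i)_{c_i}=0$ and $(y_i)_{c_i}=1$, where $c_i\in\{1,2,3\}$ is the direction of $x_iy_i$, and in addition $(x_i)_1=0$ when $c_i=3$. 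The plan is to track coordinates $1,2,3$ around $C$. As a warm-up, for $R=A_1\cup A_2$ one checks first that $c_{i+1}\ne c_i$ (otherwise $x_{i+1}=y_i\oplus e_{c_i}=x_i$), so the $c_i$ alternate between $1$ and $2$; feeding this back into the constraints gives $(y_i)_1=(y_i)_2=1$ for every $i$, which forces every linking edge into direction $1$ or $2$, so $C$ lies in a single $2$-dimensional subcube --- a square --- whence $k=2$ and $y_1=y_2$, a contradiction. The full case runs along the same lines with heavier bookkeeping: a direction-$3$ edge of $A_3'$ leaves coordinates $1$ and $2$ unchanged, and the extra condition ``$(x_i)_1=0$'' is precisely what prevents the resulting system of coordinate constraints from being realizable by a cycle through distinct vertices. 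Once admissibility of $R$ is in hand the stated inequality follows, and combining it with Observation~\ref{obs26} also gives the first non-trivial upper bound $F(Q_n)\le F_f(Q_n)\le (n2^{n-1}-5\cdot 2^{n-3})/n$ on the maximum forcing number of the hypercube.
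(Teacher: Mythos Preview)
Your reduction is exactly right and matches the paper: Corollary~\ref{cor33} pins down the maximizer $\gamma_0\equiv\tfrac1n$, and then Theorem~\ref{th15} (with $T=\varnothing$) converts the problem into finding an edge set $R$ of size $5\cdot 2^{n-3}$ that contains no full color class of any $2$-edge-colored cycle. The difference is in how $R$ is built and verified.

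The paper does not construct $R$ directly. It builds the complementary set $B_n=E(Q_n)\setminus R$ \emph{inductively}: an explicit $B_4$ is checked by hand, and for $n\ge 5$ one defines a $2$-to-$1$ graph homomorphism $g_n:Q_n\to Q_{n-1}$ (flip the second coordinate when the first is~$1$, then drop the first) and sets $B_n=g_n^{-1}(B_{n-1})$. Any cycle $C$ in $Q_n$ maps under $g_n$ to a closed walk; either this walk contains a genuine cycle of $Q_{n-1}$, and the induction hypothesis supplies blue edges of both parities, or the walk is degenerate, which forces $C$ to contain two \emph{adjacent} blue edges. This recursion replaces your ``heavier bookkeeping'' entirely.

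Your own construction $R=A_1\cup A_2\cup A_3'$ has the right cardinality, but the admissibility argument is the whole difficulty and you have not carried it out. Even the warm-up for $A_1\cup A_2$ is not complete as written: from the alternation $c_i\in\{1,2\}$ you assert $(y_i)_1=(y_i)_2=1$ for all $i$, but this only follows \emph{after} you know the linking directions $d_i$ are in $\{1,2\}$, which is precisely what you are trying to deduce. (The statement is true --- one can show the walk is forced to repeat a vertex after at most three steps --- but it needs a different argument.) For the full set $R$ you give no argument at all beyond ``runs along the same lines''; the interaction between the extra constraint $(x_i)_1=0$ on $A_3'$-edges and the unconstrained linking directions $d_i\ge 4$ generates a genuine case analysis that must actually be done. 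The paper's homomorphism trick is exactly the device that avoids this.
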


\begin{prooff}

To prove the theorem, we first consider the fractional perfect matching $\gamma$ which assigns $\frac{1}{n}$ to every edge of $Q_n$. Since $Q_n$ is a regular edge-transitive graph, Corollary \ref{cor33} guarantees that $\gamma$ has the maximum fractional forcing number. Thus, it is sufficient to show that $f_{Q_n}(\gamma) \leq \frac{n 2^{n-1}-5 \times 2^{n-3}}{n}$.

To this end, we take advantage of Theorem \ref{th15} for the defined $\gamma$. Clearly, $\supp(\gamma) = E(Q_n)$. Therefore, if we find a subset $B_n$ of the edges in such a way that for every cycle $C$ of $Q_n$ and any proper $2$-edge-coloring of $C$, every color class intersects $B_n$, then Theorem \ref{th15} will provide an upper bound on 
$f_{Q_n}( \gamma)$. Note that the upper bound in the statement of the theorem is obtained simply by substituting the size of the set $B_n$ in the upper bound given in Theorem \ref{th15}. 

In the rest of the proof, we will introduce one such subset $B_n$, inductively. From now on, we call the elements of $B_n$, \textit{blue edges}, and the elements in $E(Q_n)\backslash B_n$, \textit{red edges}. 

The main idea is as follows. First, we define $B_4$, or equivalently, we define the blue edges in $Q_4$. Then, having the blue edges of $Q_{n-1}$, we define the blue edges of $Q_n$ (i.e. $B_n$). Consider the following subset of edges in $Q_4$
\begin{align*}
A= \big\{ &\{0101, 0100\}, \{0100, 0110\}, \{0110, 0010\}, \{0010, 0011\}, \{0011, 0001\},\\
	& \{0001, 0101\}, \{0011, 0111\}, \{1101,1111\}, \{1111, 1110\}, \{1110, 1010\},\\
	 &\{1010, 1000\}, \{1000, 1001\}, \{1001, 1101\}, \{1000, 1100\} \big\}
\end{align*}

Let $B_4 := A \cup \big\{ \{0 a_{1} a_{2} a_{3}, 1 a_{1} a_{2} a_{3}\} : a_1, a_2, a_3 \in \{0, 1\} \big\}$ (See Figure \ref{fig5}). Given the subset $B_{n-1}$ of $E(Q_{n-1})$ for $n\geq 5$, we define the subset $B_n$ of $E(Q_{n})$ as follows. Define the function $ g_{n}:\{0,1\}^{n} \longrightarrow\{0,1\}^{n-1}$ as follows:
\begin{align*}
 g_{n}\left(a_{1}, \ldots, a_{n}\right)&=\left\{\begin{array}{ll}
\left(a_{2}, \ldots, a_{n}\right), & a_{1}=0, \\
\left(1-a_{2}, a_{3}, \ldots, a_{n}\right), & a_{1}=1.
\end{array}\right.
\end{align*}
It is straightforward to see that $g_n$ is a graph homomorphism from $Q_n$ to $Q_{n-1}$. 

\begin{figure}\label{fig:q4}
\begin{center}
\includegraphics[width=100mm,scale=1.4]{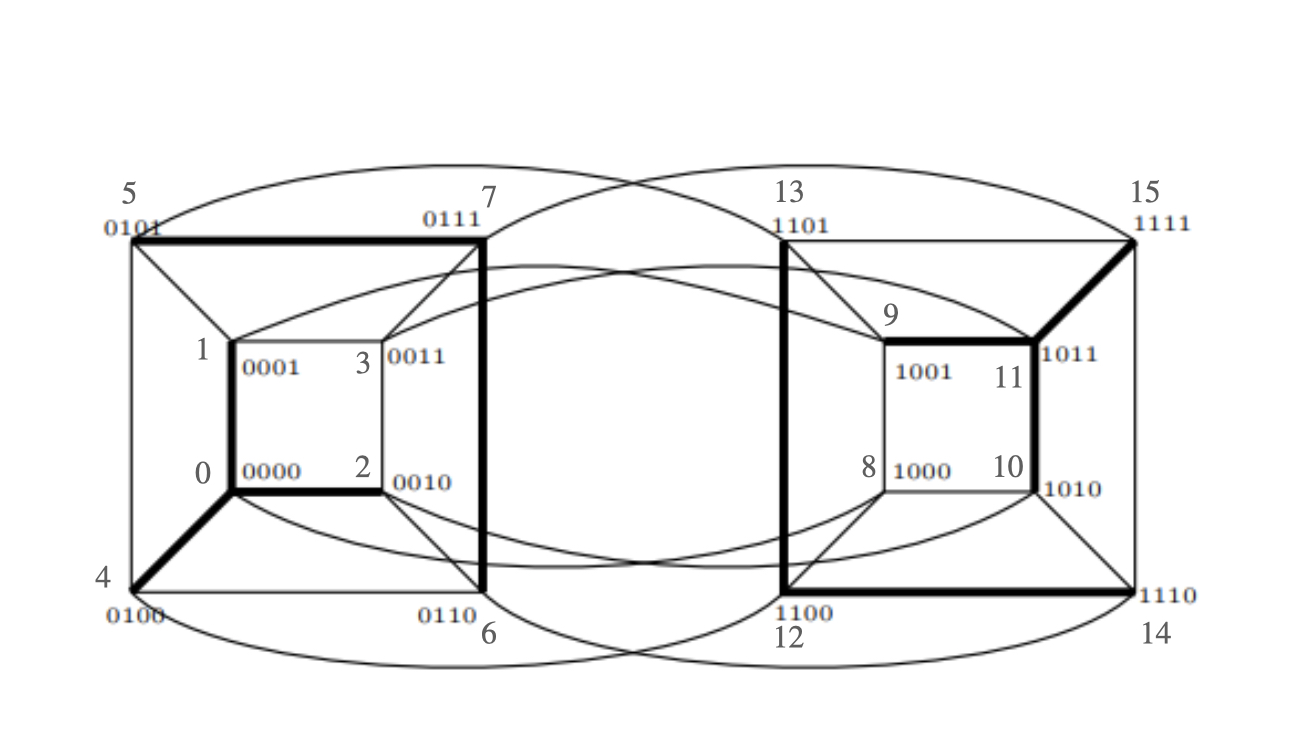}
\end{center}
\caption{The vertices of $Q_4$ are indexed with integer numbers from $0$ to $15$ which are depicted in both decimal and binary representations. The narrow edges represent the blue edges, and the bold edges represent the remaining edges of $Q_4$ (i.e. the red edges).}
\label{fig5}
\end{figure}

For every $n\geq 5$, define $B_n := g_n^{-1}(B_{n-1})$, where $g_n^{-1}(B_{n-1}) = \big\{ \{v,w\} \in E(Q_n) : \{g_n(v), g_n(w)\} \in B_{n-1} \big\}$.






The objective is to prove that each color class of any $2$-edge-coloring of the edges of any cycle intersects $B_n$. To this end, first, consider a $2$-edge-coloring of $C$.

We will prove this claim by induction on $n$. First, suppose that the claim is valid for $n=4$. For a cycle $C=(v_1, v_2,\ldots , v_{2k}$) in a graph, define the parity of the edge $\{v_i , v_{i+1}\}$, with respect to the given labelling as the parity of $i$. Notice that the parity of an edge of $C$ depends on the starting vertex in the ordering of its vertices. However, if two edges of $C$ have the same parity concerning one ordering, they have the same parity for any other ordering.

Now, consider the drawing of the graph $Q_4$ depicted in Fig. \ref{fig:q4}.
Note that the narrow edges represent the blue edges, and the bold edges represent the remaining edges of $Q_4$ (i.e. the red edges).

Let $C$ be an arbitrary cycle in $Q_4$. We show that $C$ contains two blue edges with different parity. All of the edges incident with the vertices $3$ and $8$ are blue. Therefore, if $C$ passes through either of these two vertices, let us say vertex $3$, then one of the edges of $C$ incident to the vertex $3$ has odd parity, and the other one has even parity. 
Therefore, $C$ can not passes through $3$ and $8$.
If $C$ passes through either of the vertices $0$ or $11$, let us say vertex $0$, then it contains two adjacent red edges. Otherwise, $C$ must use the edge incident to the vertex $8$, which can not happen as we argued above. Thus, $C$ contains two adjacent red edges incident to the vertex $0$, let say $\{0,1\}$ and $\{0,4\}$. The edge in $C$ before $\{0,1\}$, and the edge in $C$ after $\{0,4\}$ are both blue with different parities. Therefore, $C$ can not pass through $0$ and $11$. 

Next, we prove that $C$ can not passes through either of $1$, $2$, $9$ or $10$. On the contrary, let us assume that $C$ passes through vertex $1$. Then, $C$ must use two edges $\{1,5\}$ and $\{1,9\}$. Both of these two edges are blue with different parity. So far, we have shown that $C$ can not pass through $0, 1, 2, 3, 8, 9, 10, 11$. It is also easy to see that, if $C$ uses any vertex from the set \{4, 5, 6, 7, 12, 13, 14, 15\}, then it contains two blue edges with different parity.

Now, suppose that the claim is valid for $n-1$. Let $C = v_1, \dots, v_m, v_1$ be an arbitrary cycle in $Q_n$. Consider $g_n(C) = g_n(v_1), \dots, g_n(v_m), g_n(v_1)$. By Proposition \ref{pro1}, $g_n(C)$ is a closed walk.  First, suppose that this walk contains no cycles, therefore, it is a union of some closed walk of length $2$. This case is only happen when two edges of $C$ are of the form $e_{1}=\{(1,0, a_{3}, \ldots, a_{n}),$ $ (0,0, a_{3}, \ldots, a_{n})\}$ and $e_{2}=\{(0,0, a_{3}, \ldots, a_{n}),$ $(0,1, a_{3}, \ldots, a_{n})\}$, or of the form
$e_{3} = \{(1,1, a_{3}, \ldots, a_{n}),$ $(0,1, a_{3}, \dots, a_{n})\}$
and
$e_{4}=\{(0,1, a_{3}, \ldots,a_{n}), (0,0, a_{3}, \dots, a_{n})\}$.
Both of these two edges are blue according to the coloring. Since, these two edges are adjacent in $C$, every $2$-edge-coloring of $C$ intersects one of them. 

Now, suppose that this walk contains at least one cycle $C'$. Since $g_n$ is a graph homomorphism, every $2$-edge-coloring of $C$ induces a $2$-edge-coloring of $g_n(C)$ and in particular a $2$-edge-coloring for the cycle $C'$ in $Q_{n-1}$.

By the induction hypothesis, we know that $B_{n-1}$ intersects each color class of the induced coloring of $C'$. Since $g_n^{-1}(B_{n-1}) = B_n$, we can conclude that $B_n$ intersects both color classes of the coloring of $C$. 
\end{prooff}

\begin{coroll}\label{cor40}
$F(Q_n) \leq \lfloor\frac{n 2^{n-1}-5 \times 2^{n-3}}{n}\rfloor$.
\end{coroll}
\begin{prooff}

This corollary is a direct consequence of Observation~\ref{obs26}, and Theorem~\ref{th35}.
\end{prooff}
\medskip

We now provide a similar bound for $F_f(Q_n)$ using the theory of error correction codes which is slightly better for certain values of $n$. 

Notice that in the proof of Theorem \ref{th35}, we defined a set of edges $B_n$ with the property that it intersects both color classes of a 2-edge-coloring of every cycle. In what follows, we present an alternative way of constructing another such set, say $B'_n$. Observe that the smaller the size of $B'_n$ is, the smaller the upper bound for $f_{Q_n}(\gamma)$ is going to be. The following lemma is the key observation for constructing a small size $B'_n$.

\begin{lemm}\label{thcod}
Let $C_n$ be a subset of the vertices of $Q_n$ such that the distance of every pair of the vertices in $C_n$ is no less than 3. Then, every color class of a 2-edge-coloring of every cycle of $Q_n$ has an edge $e$ such that both endpoints of $e$ are outside $C_n$.
\end{lemm}

\begin{prooff}

Consider an arbitrary cycle $C$ of $Q_n$. Note that all the cycles of $Q_n$ are of length greater than or equal to 4. First, suppose that there exist at least three consecutive vertices $v_1,v_2,v_3$ of $C$ none of which are elements of $C_n$. Now, in every proper 2-edge-coloring of $C$, the edges $v_1v_2$ and $v_2v_3$ have different colors. 

Otherwise, suppose that among any three consecutive vertices $v_1,v_2,v_3$ of $C$, at least one of them belongs to $C_n$. By the assumption of the lemma, it is impossible that two of three belong to $C_n$. Thus, between any three consecutive vertices in $C$, precisely one belongs to $C_n$. When the length of $C$ is at least $6$, without loss of generality, we may assume that $v_1,v_4\in C_n$ but non of $v_2,v_3,v_5,v_6$ are in $C_n$. In this case, $v_2v_3$ and $v_5v_6$ are two edges of different colors so that non of their endpoints belong to $C_n$. When $C$ has length less than $6$, the claim is obvious since at most one vertex of it may belong to $C_n$ and there are two consecutive edges satisfying the required properties. 
\end{prooff}
\medskip
Note that in the context of error correction codes, the set $C_n$ with the mentioned property is called a distance $3$ binary code and it is well known that for any integer $n$, there exists a distance three code $C_n$ of size equal to $2^{n-\lceil \log(n+1)\rceil}$. (See for instance \cite{dist3codes}). Using Theorem \ref{th15}, the following result follows immediately.

\begin{coroll}
For every integer $n$ we have $F(Q_n) \leq 2^{n-1} - 2^{(n-\lceil \log(n+1)\rceil)}$.
\end{coroll}

This new upper bound for the maximum forcing number of $Q_n$ is close to the lower bound in Proposition \ref{pro5}. However, the problem of finding the exact value of $F(Q_n)$ remains open.

\section{Acknowledgements}
We must thank Prof. Ebadollah S. Mahmoodian for his valuable suggestions and technical support on this research work. We would also like to express our sincere gratitude to the anonymous reviewers for their insightful comments, and constructive suggestions, which have significantly improved the quality of this manuscript. 

\bibliographystyle{alpha}
\bibliography{RN.bib}
%
%

%
%
\end{document}